\theoremstyle{plain}
\newtheorem{thm}{Theorem}[section]
\newtheorem{lemma}[thm]{Lemma}
\newtheorem{prop}[thm]{Proposition}
\newtoks\prt
\theoremstyle{definition}
\newtheorem{remark}[thm]{Remark}
\newtheorem{ques}[thm]{Question}
\newtheorem{definition}[thm]{Definition}
\def\eqn#1$$#2$${\begin{equation}\label#1#2\end{equation}}
\numberwithin{equation}{section}
\def\dist{\operatorname{dist}}
\def\co{\operatorname{co}}
\def\spn{\operatorname{span}}
\def\T{\mathcal{T}}
\def\H{\mathcal{H}}
\def\phi{\varphi}
\def\epsilon{\varepsilon}
\def\en{\mathbb N}
\def\er{\mathbb R}
\def\id{\operatorname{id}}
\def\sgn{\operatorname{sgn}}
\def\rn{\mathbb R^n}
\def\sgn{\operatorname{sgn}}
\newtoks\by
\newtoks\paper
\newtoks\book
\newtoks\jour
\newtoks\yr
\newtoks\pages
\newtoks\vol
\newtoks\publ
\def\ota{{\hbox\vol{???}}}
\def\cLear{\by=\ota\paper=\ota\book=\ota\jour=\ota\yr=\ota
	\pages=\ota\vol=\ota\publ=\ota}
\def\endpaper{\the\by, {\the\paper},
	\textit{\the\jour} \textbf{\the\vol} (\the\yr), \the\pages.\cLear}
\def\endbook{\the\by, \textit{\the\book}, \the\publ.\cLear}
\def\endprep{\the\by, \textit{\the\paper}, \the\jour.\cLear}
\def\endyearprep{\the\by, \textit{\the\paper}, \the\jour, (\the\yr).\cLear}
\def\name#1#2{#2 #1}
\def\nom{ \rm no. }
\newcommand{\labeltext}[2]{%
	\@bsphack
	\def\@currentlabel{#1}{\label{#2}}%
	\@esphack
}
\def\step#1#2#3{\par \noindent{{\\ \bf Step~\labeltext{#1}{#3}#1. }{\bf #2. }}}
\begin{document}
	
	\title[Smooth homeomorphic approximation of piecewise affine homeomorphisms]{Smooth homeomorphic approximation of piecewise affine homeomorphisms}
	
	\author[D. Campbell]{Daniel Campbell}
	\address{D.~Campbell: Department of Mathematics, University of Hradec Kr\' alov\' e, Rokitansk\'eho 62, 500 03 Hradec Kr\'alov\'e, Czech Republic} 
	\address{Faculty of Economics, University of South Bohemia, Studentsk\' a 13, Cesk\' e Budejovice, Czech Republic}
	\email{\tt daniel.campbell@uhk.cz}
	
	\author[F. Soudsk\' y]{Filip Soudsk\' y}
	\address{F.~Soudsk\' y:Department of mathematics and didactic of mathematics, Technical University of Liberec
		Faculty of Science, Humanities and Education}
	\address{Department of Mathematics, University of Hradec Kr\' alov\' e, Rokitansk\'eho 62, 500 03 Hradec Kr\'alov\'e, Czech Republic} 
	\email{\tt filip.soudsky@tul.cz}

	\thanks{The authors were both supported by the grant GACR 20-19018Y}

	\subjclass[2010]{Primary 41A29; Secondary 41A30, 57Q55}
	\keywords{homeomorphic approximation, Sobolev approximation, piecewise affine homeomorphisms}
	
	\begin{abstract}
		Given any $f$ a locally finitely piecewise affine homeomorphism of $\Omega \subset \rn$ onto $\Delta \subset \rn$ in $W^{1,p}$, $1\leq p < \infty$ and any $\epsilon >0$ we construct a smooth injective map $\tilde{f}$ such that $\|f-\tilde{f}\|_{W^{1,p}(\Omega,\rn)} < \epsilon$.
	\end{abstract}
	
	\maketitle
	
	\section{Introduction}
	
	It is well known that any Sobolev map $W^{1,p}$ on $\rn$ can be approximated (for example by mollification) by smooth maps. Standard procedures for obtaining a smooth approximation of a Sobolev homeomorphism, however, do not preserve the injectivity of the original map. Therefore if we want to approximate a Sobolev homeomorphism by a smooth and injective map we need to introduce more creative approaches. This question is known as the Ball-Evan's problem (see \cite{B}, \cite{B2}) and may be formulated as follows:
	\begin{ques}
		If $f\in W^{1,p}(\Omega, \mathbb R^n)$ is invertible, can it be approximated in $W^{1,p}(\Omega,\er^n)$ by piecewise affine invertible mappings (or diffeomorphisms)?
	\end{ques}

	The motivation for this question is found in non-linear elasticity, especially concerning the question of regularity of minimizers, numerical approximation of minimizers and the Euler–Lagrange equations. For a review of these questions we recommend \cite{B2}. In short, many of the approaches we would like to use, require testing with a smooth map close to our candidate. On the other hand, this map should still be an admissible map for the nonlinear elastic problem. This leads us to the problem of finding diffeomorphisms close to homeomorphisms in the Sobolev norm.
	
	The question quickly arises whether approximation by piecewise affine homeomorphisms or by diffeomorphisms is equivalent. One implication was shown in \cite{IO2}, namely that the triangulation of a diffeomorphism is possible and their result covers all dimensions $n\geq 1$. They prove that their piecewise affine maps are close to the original diffeomorphism in $W^{1,\infty}$ and the same holds for the inverse. The opposite implication is more difficult and so far only a planar result \cite{MP} has been proven. Note that recently a planar result working also with the second derivatives (of the map but not its inverse) has been achieved in \cite{CH}.

	In this paper we show that a piecewise affine map can be approximated by smooth homeomorphic maps in $W^{1,p}$ for any $1\leq p< \infty$. Our proof is simpler and more concise than the proof in \cite{MP}. Further our result holds for all $n\geq 2$. The drawback of our approach concerns the inverse of the map. Although our maps are smooth their inverses are not. Despite this, given a fixed $1\leq q<\infty$ it is not hard to adapt our approach so that our smooth approximations also approximate the inverse of the piecewise affine maps in $W^{1,q}$, but we have not pursued this.
	
	Positive results on diffeomorphic approximation of homeomorphisms in $W^{1,p}$ are up till now exclusively planar (but see also \cite{IO}) and a survey can be found in \cite{CHT}. For dimension 3 the question is completely open and in dimension higher $n\geq 4$ there is only known the negative result \cite{CHT}. In this respect our result is a first building block towards attacking the very challenging question of smooth injective approximation in higher dimension.
	
	The current results which allow one to approximate a map and its inverse simultaneously in $W^{1,p}$ are limited in their scope. In \cite{DP} one must assume that the map is bi-Lipschitz. Without this assumption the result is only known for the $W^{1,1}-W^{1,1}$ case, proved in \cite{BiP}. Although being able to approximate a piecewise affine map and its inverse simultaneously by diffeomorphisms is useful, we would need be able to approximate homeomorphisms and their inverse simultaneously to utilise this result. On the other hand the path to approximating a homeomorphism $f \in W^{1,p}$ (say from $\er^3$ to $\er^3$) by smooth homeomorphisms may well lead via piecewise affine maps. Our result opens this pathway; it is as follows:

	\begin{thm}\label{Cor}
		Let $\Omega\subset\er^n$ be a domain. Let $f:\Omega\to\er^n$ be a locally finite piece-wise affine homeomorphism. Then for every $\epsilon>0$ there exists an injective $\tilde{f} \in \mathcal{C}^1(\Omega, \rn)$ with 
		$$\|\tilde{f} - f\|_{L^{\infty}(\Omega)} < \epsilon$$ and
		$$
		\|D\tilde{f} - Df\|_{L^p(\Omega)}< \epsilon.
		$$
	\end{thm}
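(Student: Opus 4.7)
My plan is a downward induction on the dimension of the faces of the piecewise affine structure of $f$: I first smooth across the codimension-$1$ faces, then across faces of higher codimension, and finally across vertices. Local finiteness of the triangulation lets me localise each step near one face at a time and splice the local modifications together via a partition of unity subordinate to a thickening of the skeleton.

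The atomic step smooths $f$ across the interior of an $(n-1)$-face $F$ shared by two adjacent simplices $T_1$ and $T_2$. Because the two affine restrictions agree on $F$, their linear parts differ by a rank-one matrix whose kernel contains the directions along $F$. After an affine change of variables placing $F$ in $\{x_n=0\}$ through the origin, one can write
$$f(x)=Ax+b+\max(x_n,0)\,v \qquad \text{on } T_1\cup T_2,$$
for some jump vector $v\in\rn$. I replace $t\mapsto\max(t,0)$ by a smooth convex approximant $\phi_\eta$ equal to it outside $(-\eta,\eta)$ and satisfying $0\le\phi_\eta'\le 1$; the resulting $C^\infty$ modification $\tilde f_\eta$ has
$$\det\bigl(A+\phi_\eta'(x_n)\,v\,e_n^{T}\bigr)=\det(A)\bigl(1+\phi_\eta'(x_n)\,e_n^{T}A^{-1}v\bigr)$$
by the matrix determinant lemma. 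This quantity is nonzero for every $x_n$ because it interpolates linearly between $\det(A)$ and $\det(A+v\,e_n^{T})$, which are nonzero and of the same sign (a homeomorphism cannot fold along an $(n-1)$-face). Combined with monotonicity of $\phi_\eta$ in $x_n$ this yields injectivity on the whole slab, while $\|\tilde f_\eta - f\|_{L^\infty}$ and $\|D\tilde f_\eta - Df\|_{L^p}$ are controlled by $\eta|v|$.

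To promote this atomic construction to the full theorem, I would proceed by downward induction on the skeleton dimension, performing the atomic step in successively thinner tubes around successively lower-dimensional strata, with the tube widths $\delta_{n-1}\gg\delta_{n-2}\gg\dots\gg\delta_0$ chosen so that the nested modifications at different scales fit together consistently. At each step the locally finite structure guarantees that only finitely many rank-one jumps need to be interpolated simultaneously, and outside the tube around the lower strata these jumps are carried by pairwise disjoint slabs, so their smoothings can be added independently.

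The principal obstacle will be the inductive step near the low-dimensional strata, above all the vertices, where several simplices meet and the rank-one corrections of adjacent faces can in principle conflict. I would handle this by working in spherical coordinates centred at a vertex $p$: there $f$ has the form $(r,\omega)\mapsto f(p)+r\,g(\omega)$ with $g$ a piecewise linear homeomorphism of the unit sphere inherited from the link of $p$, and smoothing can be performed radially by interpolating $g$ across the $(n-2)$-skeleton of the link at scales commensurate with $r$. Injectivity then reduces to the already-established injectivity of $g$ on the link, and summing the $W^{1,p}$ errors across the nested scales while shrinking the tube widths aggressively gives an overall error below $\epsilon$.
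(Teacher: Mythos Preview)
Your atomic step across a single codimension-$1$ face is correct, including the determinant interpolation and the injectivity argument on the infinite slab. The difficulty, as you anticipate, is entirely in the lower strata, and there your proposal contains a genuine gap.

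At a vertex $p$ you write $f(p+r\omega)=f(p)+r\,g(\omega)$ and propose to replace $g$ by a smooth sphere homeomorphism $\tilde g$. But the resulting map $x\mapsto f(p)+|x-p|\,\tilde g\bigl((x-p)/|x-p|\bigr)$ is positively $1$-homogeneous, and such a map is differentiable at $p$ only when $\tilde g$ is the restriction of a \emph{linear} map to the sphere. Since $\tilde g$ is a smoothing of a genuinely piecewise linear sphere map, it is not linear, and your construction is merely Lipschitz at $p$, not $\mathcal C^1$. Making the smoothing scale of $\tilde g$ depend on $r$ does not help: letting the scale shrink with $r$ recovers the non-smooth $g$ in the limit, while letting it grow destroys injectivity. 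A second, smaller concern is that cutting off each slab modification near $\partial F$ with a partition of unity need not preserve injectivity; convex combinations of injective maps can collide.

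The paper avoids both problems by a different mechanism: instead of modifying the \emph{values} of $f$, it precomposes with a smooth self-homeomorphism $\Xi_\delta$ of the domain that maps each subsimplex onto itself and has vanishing normal derivative on every face (and zero full derivative at every vertex). Then $\tilde f=f\circ\Xi_\delta$ is automatically injective as a composition of injections, and the chain rule kills the non-smooth normal jumps of $Df$ because $D_{\vec n}\Xi_\delta=0$ there; at a vertex $D\Xi_\delta=0$ outright, so the Lipschitz bound on $f$ suffices for $\mathcal C^1$. Your rank-one smoothing idea is elegant for a single face and could perhaps be pushed through with substantial additional work at the strata, but as written the vertex step fails to produce a $\mathcal C^1$ map.
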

	
	In fact this theorem is the corollary of a the more general case in the setting of rearrangement invariant function spaces. Given a rearrangement invariant function space $X$ we define the characteristic function $\phi_X$ of $X$ as $\phi_X(t) = \|\chi_A\|_{X}$ for any $A\subset \rn$ satisfying $\mathcal{L}^n(A) = t$. In the special case of $W^{1,p}$ we have $\phi_{L^{p}}(t) = \sqrt[p]{t}$. We prove:
	
	\begin{thm}\label{main}
		Let $\Omega\subset\er^n$ be a domain. Let $f:\Omega\to\er^n$ be a locally finite piece-wise affine homeomorphism. Let $X(\Omega)$ be a rearrangement invariant function space with $\phi_X(t) \to 0$ as $t\to 0$. Then for every $\epsilon>0$ there exists an injective $\tilde{f} \in \mathcal{C}^1(\Omega, \rn)$ with 
		$$\|\tilde{f} - f\|_{L^{\infty}(\Omega)} < \epsilon$$ and
		$$
		\|D\tilde{f} - Df\|_{X(\Omega)}< \epsilon.
		$$
	\end{thm}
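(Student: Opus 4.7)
My plan is to smooth $f$ across each $(n-1)$-face of the underlying triangulation one face at a time, while treating a tiny neighborhood of the lower-dimensional skeleton as an exceptional set whose contribution is absorbed using the smallness hypothesis $\phi_X(t)\to 0$ as $t\to 0$. Because the triangulation is locally finite, I would first exhaust $\Omega$ by open sets $U_k$ with $\overline{U_k}\ss U_{k+1}$ each meeting only finitely many $n$-simplices and construct $\tilde f$ on each $U_k$ to within $\epsilon/2^{k+1}$ of $f$, patching with a partition of unity. This reduces the problem to a compact $K\subset\Omega$ covered by finitely many simplices.

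The local construction in the interior of a single $(n-1)$-face would run as follows. Fix such a face $F$ shared by simplices $T_\pm$. After affine changes of coordinates in the domain and the target, I may assume $F\subset\{x_n=0\}$, $T_\pm\subset\{\pm x_n>0\}$, $f=\id$ on $T_+$, and $f|_{T_-}$ is linear and agrees with the identity on $\{x_n=0\}$; such a map must then have the form $B(x)=x+x_n u$ for some $u\in\rn$, and the homeomorphism condition near $F$ forces $u_n>-1$. I pick a smooth nondecreasing $\psi\colon\er\to\er$ with $\psi(t)=t$ for $t\le-\sigma$, $\psi(t)=0$ for $t\ge\sigma$, and $0\le\psi'\le1$, and set $\tilde B(x)=x+\psi(x_n)u$. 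Injectivity follows directly: the $n$-th coordinate $x_n+\psi(x_n)u_n$ is strictly increasing in $x_n$ because $1+\psi'(x_n)u_n\ge\min(1,1+u_n)>0$, so $\tilde B_n(x)=\tilde B_n(y)$ forces $x_n=y_n$ and the remaining coordinates are then affine in $x'$. This produces a smooth injective replacement for $B$ in the slab $\{|x_n|<\sigma\}$, matching $B$ outside, with both $\|\tilde B-B\|_\infty$ and $|D\tilde B-DB|$ controlled by $|u|$.

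The main obstacle will be combining these slab constructions near the $(n-2)$-skeleton $\Sigma_{n-2}$, where three or more simplices can meet. I plan to choose $\delta>0$ small and set $N_\delta$ equal to the $\delta$-neighborhood of $\Sigma_{n-2}$ in $K$. Because $\Sigma_{n-2}$ has finite $(n-2)$-dimensional Hausdorff measure in $K$, one has $\mathcal{L}^n(N_\delta)\le C\delta^2$, and hence $\|\chi_{N_\delta}\|_{X}=\phi_X(\mathcal{L}^n(N_\delta))\to0$ as $\delta\to0$. On $K\setminus N_\delta$ the tubular neighborhoods of distinct $(n-1)$-faces at scale $\sigma\ll\delta$ are pairwise disjoint and the single-face construction applies on each independently. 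Inside $N_\delta$ I intend to produce a smooth injective extension of the already-smoothed boundary values by an Alexander/coning-type radial extension centered on $\Sigma_{n-2}$; the crucial input is that $f$ is bi-Lipschitz on $K$ with constants depending only on the finitely many simplices, yielding a bound $|D\tilde f|\le C$ on $N_\delta$ independent of $\delta$, which gives $\|D\tilde f-Df\|_{X(N_\delta)}\le 2C\|\chi_{N_\delta}\|_X\to0$.

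To conclude, I would choose $\delta$ small so that the $N_\delta$ contribution is below $\epsilon/2$, and then $\sigma\ll\delta$ so that the slab contributions total at most $\epsilon/2$; the $L^\infty$ estimate follows because both constructions displace points by $O(\sigma)$ or $O(\delta)$ respectively. The genuinely hard step will be the injective extension inside $N_\delta$ with uniform derivative bounds, since arbitrarily many simplices can meet along an $(n-2)$-cell and the image of such a neighborhood under $f$ can be combinatorially intricate. Everything else reduces to the one-face computation together with the $\phi_X(t)\to0$ smallness of $N_\delta$.
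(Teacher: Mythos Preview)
Your proposal has a genuine gap precisely at the point you yourself flag as ``the genuinely hard step'': the smooth injective extension inside $N_\delta$. An Alexander or coning extension produces a homeomorphism, not a $\mathcal{C}^1$ map; coning over a sphere introduces a singularity at the cone point, and even if you smooth radially you must still match first derivatives along $\partial N_\delta$ with the slab constructions, which were built independently for each face with different vectors $u$. Nothing in the sketch explains why these derivatives agree, why the resulting map is injective across $\partial N_\delta$, or why the derivative bound is uniform in $\delta$ when arbitrarily many simplices meet. In effect the proposal reduces the problem to itself: all the difficulty of smoothing a piecewise affine homeomorphism near the $(n-2)$-skeleton has been pushed into a single unjustified sentence.

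The paper avoids this difficulty by a different mechanism. Instead of modifying $f$ in the target, it builds a single smooth injection $\Xi_\delta:\Omega\to\Omega$ of the \emph{domain} that maps every subsimplex onto itself and satisfies $D_{\vec{n}}\Xi_\delta=0$ along every face, for every normal direction $\vec{n}$, and then sets $\tilde f=f\circ\Xi_\delta$. Injectivity is then automatic (composition of injections), and $\mathcal{C}^1$-regularity follows from the chain rule: wherever $Df$ has a jump it is only in a normal direction, and that direction is annihilated by $D\Xi_\delta$. The construction of $\Xi_\delta$ is a hierarchy of explicit ``pinching'' maps, one for each dimension of the skeleton, each supported in a thin neighborhood of its stratum and each with $\|D\cdot\|_\infty\le C$ independent of $\delta$; the small-measure argument you describe then applies to $\{\Xi_\delta\neq\id\}$. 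Your one-face slab computation is morally the top level of this hierarchy, but the paper's reparametrization viewpoint is what makes the lower strata tractable.
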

	
	We conclude this introduction with an overview of the idea of the proof in the simpler case of $n=2$. This case suffices to demonstrate the idea we apply in higher dimension and the simplicity makes it very easy to follow. Later, in section~\ref{Meat} we give a full detailed proof of the claim in dimension 3. Finally we explain the differences for general dimension.
	
	\subsection*{Idea of the proof for $n=2$}
	
	\step{1}{The proof of the claim given a map $\Xi_{\delta}$}{1A}
	
	Let us assume that we have a locally finite simplicial complex of triangles (for the definition see the preliminaries) and a homeomorphism $f$ which equals an affine map on each triangle. By dividing the triangles we may assume that there are no angles greater than $\frac{\pi}{2}$.
	
	We choose a parameter $\delta>0$. Our aim is to construct a smooth injective mapping, (call it $\Xi_{\delta}$) which sends each side of each triangle of the simplicial complex onto itself. At the same time we want
	\begin{equation}\label{outer}
	\partial_{\vec{n}}\Xi_{\delta}(t) = 0
	\end{equation}
	for any $t\in \partial T$, where $T$ is a triangle of the simplicial complex and $\vec{n}$ is a unit vector perpendicular to $\partial T$ at $t$. At the vertices of $T$ we interpret this as $D\Xi_{\delta} = 0$. Further, calling $E_{\delta} = \{\Xi_{\delta} \neq \id \}$, we require that
	\begin{equation}\label{ids}
	\mathcal{L}^2(E_{\delta}\cap T) \leq C\delta \mathcal{L}^2(T) 
	\end{equation}
	for any triangle $T$. Finally we require that
	\begin{equation}\label{bound}
	\|D\Xi_{\delta}\|_{\infty} \leq C
	\end{equation}
	with $C$ independent of $\delta$.

	Given a $\Xi_{\delta}$ as described in the previous paragraph we define
	$$
	\tilde{f}(x) = f(\Xi_{\delta}(x)).
	$$
	Although $f$ is not smooth it has only a jump at $\partial T$ and that only for the derivatives not perpendicular to $\vec{n}$. But we have \eqref{outer}, which (using the chain rule) implies that $\tilde{f}$ is smooth everywhere (including on every $\partial T$). Combining \eqref{ids} and \eqref{bound} we have 
	$$
	\int_{T\cap E_{\delta}} |Df - D\tilde{f}|^{p} \leq C \int_{T\cap E_{\delta}}|Df|^{p}\leq  C\delta \int_{T}|Df|^{p}
	$$
	and so
	$$
	\|Df - D\tilde{f}\|_{p} \leq C\delta^{1/p} \|Df\|_{p} < \epsilon
	$$
	for $\delta$ sufficiently small. What remains is to show the construction of such a $\Xi_{\delta}$.

	\step{2}{Determining some constants}{1B}
	
	For any $s$ vertex of any triangle $T$ there are a finite number of triangles containing $s$. Then there is a minimal angle between any two sides ending at $s$, call it $\alpha_1(s)$ (if this angle is greater than $\pi/4$ then put $\alpha_1(s) = \pi /4$). For each vertex $s$ define
	$$
	\alpha_2(s):=\min\{\alpha_1(s),\alpha_1(s'):s'\textup{ neighbour of }s\}.
	$$
	Then for any side $S$ with endpoints $s_1, s_2$ we call
	$$
	\alpha(S) = \min\{ \alpha_2(s_1), \alpha_2(s_2) \}.
	$$
	Further, for $S$, the common side of the triangles $T_1, T_2$ we define 
	$$
	m_1(S) = \min \Big\{ 1, \H^1(S), \frac{\mathcal{L}^2(T_1)}{\H^1(S)}, \frac{\mathcal{L}^2(T_2)}{\H^1(S)}  \Big\}.
	$$
	Further, for any $s$ vertex, we define $m_2(s)$ as the minimum of $m_1(S)$ over all $S$ ending at $s$ and finally for each $S$ with endpoint $s_1, s_2$ we define
	$$
	m(S) = \min \{m_2(s_1), m_2(s_2)\}.
	$$
	To make our notation shorter we define
	$$
	d = d(S) = \tfrac{1}{2}\delta m(S)
	$$
	and
	$$
	t = t(S) = \tan(\tfrac{1}{3}\alpha(S)).
	$$
	For each vertex $s$ we define $\eta_1(s) = \min\{\sqrt{ \mathcal{L}^2(T)};T \ni s \}$ and $\eta_2(s)$ as the distance from $s$ to the next closest vertex. Then we define
	$$
	\eta(s) = \min\{\eta_1(s), \tfrac{1}{2}\eta_2(s)\}.
	$$
	
	\step{3}{A map that pinches around the middle bit of each side $S$}{1C}
	
	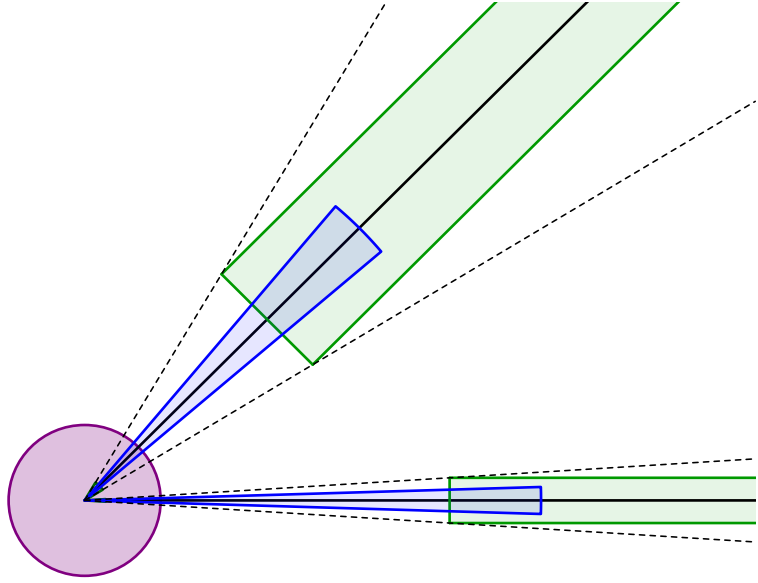
\begin{figure}
		\definecolor{qqwuqq}{rgb}{0.,0.39215686274509803,0.}
		\definecolor{qqqqff}{rgb}{0.,0.,1.}
		\definecolor{qqzzqq}{rgb}{0.,0.6,0.}
		\definecolor{yqqqyq}{rgb}{0.5019607843137255,0.,0.5019607843137255}
		\begin{tikzpicture}[line cap=round,line join=round,>=triangle 45,x=0.6cm,y=0.6cm]
		\clip(-2,-2.) rectangle (14.7,11.);
		\draw [line width=1.pt,color=yqqqyq,fill=yqqqyq,fill opacity=0.25] (0.,0.) circle (1.cm);
		\fill[line width=1.pt,color=qqzzqq,fill=qqzzqq,fill opacity=0.10000000149011612] (8.,0.5) -- (8.,-0.5) -- (15.,-0.5) -- (15.,0.5) -- cycle;
		\fill[line width=1.pt,color=qqzzqq,fill=qqzzqq,fill opacity=0.10000000149011612] (3.,5.) -- (5.,3.) -- (16.,14.) -- (14.,16.) -- cycle;
		\draw [shift={(0.,0.)},line width=1.pt,color=qqwuqq,fill=qqwuqq,fill opacity=0.10000000149011612] (0,0) -- (30.96375653207352:0.4582901385157742) arc (30.96375653207352:59.03624346792648:0.4582901385157742) -- cycle;
		\draw [shift={(0.,0.)},line width=1.pt,color=qqwuqq,fill=qqwuqq,fill opacity=0.10000000149011612] (0,0) -- (-3.5763343749973515:0.4582901385157742) arc (-3.5763343749973515:3.5763343749973515:0.4582901385157742) -- cycle;
		\draw [line width=1.pt] (0.,0.)-- (15.,0.);
		\draw [line width=1.pt] (0.,0.)-- (15.,15.);
		\draw [line width=1.pt,color=qqzzqq] (8.,0.5)-- (8.,-0.5);
		\draw [line width=1.pt,color=qqzzqq] (8.,-0.5)-- (15.,-0.5);
		\draw [line width=1.pt,color=qqzzqq] (15.,-0.5)-- (15.,0.5);
		\draw [line width=1.pt,color=qqzzqq] (15.,0.5)-- (8.,0.5);
		\draw [line width=1.pt,color=qqzzqq] (3.,5.)-- (5.,3.);
		\draw [line width=1.pt,color=qqzzqq] (5.,3.)-- (16.,14.);
		\draw [line width=1.pt,color=qqzzqq] (16.,14.)-- (14.,16.);
		\draw [line width=1.pt,color=qqzzqq] (14.,16.)-- (3.,5.);
		\draw [shift={(0.,0.)},line width=1.pt,color=qqqqff,fill=qqqqff,fill opacity=0.10000000149011612]  (0,0) --  plot[domain=0.702256931509007:0.8685393952858895,variable=\t]({1.*8.514693182963201*cos(\t r)+0.*8.514693182963201*sin(\t r)},{0.*8.514693182963201*cos(\t r)+1.*8.514693182963201*sin(\t r)}) -- cycle ;
		\draw [shift={(0.,0.)},line width=1.pt,color=qqqqff,fill=qqqqff,fill opacity=0.10000000149011612]  (0,0) --  plot[domain=-0.029991004856878334:0.029991004856877897,variable=\t]({1.*10.004498987955369*cos(\t r)+0.*10.004498987955369*sin(\t r)},{0.*10.004498987955369*cos(\t r)+1.*10.004498987955369*sin(\t r)}) -- cycle ;
		\draw [line width=0.6pt,dash pattern=on 2pt off 2pt,domain=0.0:14.7] plot(\x,{(-0.--3.*\x)/5.});
		\draw [line width=0.6pt,dash pattern=on 2pt off 2pt,domain=0.0:14.7] plot(\x,{(-0.--0.5*\x)/8.});
		\draw [line width=0.6pt,dash pattern=on 2pt off 2pt,domain=0.0:14.7] plot(\x,{(-0.-0.5*\x)/8.});
		\draw [line width=0.6pt,dash pattern=on 2pt off 2pt,domain=0.0:14.7] plot(\x,{(-0.--5.*\x)/3.});
		\begin{scriptsize}
		\end{scriptsize}
		\end{tikzpicture}
		\caption{The green rectangles are the sets where $b_{S,\delta}$ are not identity. These sets are inside circular sectors around the respective side $S$ with angular width $\tfrac{1}{3}\alpha(S)$ either side of $S$. The angle between sides is at least $\alpha_1(s) \geq \alpha(S)$ and so the two green rectangles do not intersect. The blue circular sectors are the sets where $a_{s,S,\delta} \neq \id$. Similarly as before the sets are pairwise disjoint. The purple circle is the set where $e_{s,\delta}\neq \id$.}\label{fig:only}
	\end{figure}

	By $S$ we denote the common side of $T_1$ and $T_2$, whose endpoints we denote as $s$ and $s + l\vec{u}$, where $\vec{u}$ is the unit vector parallel to $S$. By $\vec{n}$ we denote a unit vector perpendicular to $S$. We define a mapping $b_{S,\delta}$ using 2 functions. The first function, $g_{dt} = g_{d(S)t(S)}$, is a smooth injective odd function (defined in Proposition~\ref{GString}), equal to identity for arguments greater than $dt = d(S)t(S)$ and $g'_{dt}(0) = 0$. The other is a smooth convex combination $c_{a,b}$ (defined in Proposition~\ref{CString}). The mapping $b_{S,\delta}$ smoothly `pinches' in the direction $\vec{n}$ along the middle part of $S$ and is identity as soon as we are not too close to $S$. For $w \in (-\infty ,l/2]$ and $z\in \er$ we define
	$$
	b_{S,\delta}(s + w\vec{u} + z \vec{n}) = s + w\vec{u} + c_{d,2d}(w) g_{dt}(z)\vec{n}  +[1- c_{d,2d}(w)]z\vec{n}
	$$
	and similarly for $w \in (-\infty ,l/2]$ and $z\in \er$
	$$
	b_{S,\delta}(s + l\vec{u} - w\vec{u} + z \vec{n}) = s +l\vec{u} - w\vec{u} + c_{d,2d}(w) g_{dt}(z)\vec{n}  +[1- c_{d,2d}(w)]z\vec{n}.
	$$
	For each $S$ we then have $b_{S,\delta}$ defined on $\er^2$.
	
	Thanks to the properties of $g_{dt}$ we have that $\partial_{\vec{n}}b_{S,\delta} = 0$ on $S \setminus [B(s,2d) \cup B(s+l\vec{u}, 2d)]$. It is not hard to check (see Figure~\ref{fig:only}) that for any $S\neq S'$ we have that
	\begin{equation}\label{reading1}
	\{b_{S,\delta} \neq \id\}\cap \{b_{S',\delta}\neq \id \} = \emptyset.
	\end{equation}
	Further  $\|g_{dt}'\|_{\infty}\leq C$ with $C$ independent of $dt$ and so $\|Db_{S,\delta}\|_{\infty}\leq C$. Then the smooth injective mapping $b_{\delta}$ defined as the composition of all $b_{S,\delta}$  for all $S$ (the composition is independent of the order we take the composition in by \eqref{reading1}) satisfies
	\begin{equation}\label{bound1}
	\|Db_{\delta}\|_{\infty}\leq C.
	\end{equation}
	Notice that
	\begin{equation}\label{cutie1}
	\mathcal{L}^2(\{ b_{S,\delta} \neq \id\}) \leq 2d(S)t(S) \H^1(S)  \leq \delta \mathcal{L}^2(T)
	\end{equation}
	by the choice of $m(S)$, i.e. $m(S) \leq  \frac{\mathcal{L}^2(T)}{\H^1(S)}$ and $\alpha(S)$ i.e. $t(S) =\tan(\alpha(S)/3)\leq 1$.

	\step{4}{A map that pinches at the end parts of each side $S$}{1D}

	For each $s$ one of the endpoints of a side $S$ we define a map $a_{s,S, \delta}$. The map $a_{s,S,\delta}$ will pinch along the side $S$ in the direction $\vec{n}$ on $S\cap B(s, 2d)$ (i.e. the parts of $S$ not pinched by $b_{S,\delta}$). Again we call $\vec{u}$ the vector parallel to $S$ going from $s$ and $\vec{n}$ will be a unit vector perpendicular to $\vec{u}$. Introducing the polar coordinate system centred at $s$ from the vector $\vec{u}$ i.e. $(r,\phi) = s+  r\cos(\phi)\vec{u} + r\sin(\phi)\vec{n}$ we define
	$$
	a_{s,S, \delta}(r,\phi) =  (1-c_{2d,3d}(r))(r, g_{\alpha/3}(\phi)) + c_{2d,3d}(r) (r,\phi).
	$$
	Standard computation gives that
	\begin{itemize}
		\item $a_{s,S,\delta}$ is injective on $\er^2$,
		\item $a_{s,S,\delta}$ is smooth everywhere except at $s$,
		\item $\|Da_{s,S,\delta}\|_{\infty} \leq C$ with $C$ independent of $s, S$ and $\delta$,
		\item $\partial_{\vec{n}}a_{s,S,\delta} = 0$ on $S\cap B(s,2d)$.
	\end{itemize}
	As can be easilly observed from Figure~\ref{fig:only}, we have
	\begin{equation}\label{reading2}
	\{a_{s,S,\delta} \neq \id\}\cap \{a_{s', S',\delta}\neq \id \} = \emptyset.
	\end{equation}
	as soon as either $s \neq s'$ or $S \neq S'$. Further a standard computation gives $\|Da_{s,S,\delta}\| \leq C$ independently of $s, S$ and $\delta$. Then $a_{\delta}$ defined as the composition of $a_{s,S,\delta}$ for all combinations of $s$ and $S$ (the order of the composition is not important by \eqref{reading2}) satisfies
	\begin{equation}\label{bound2}
	\|Da_{\delta}\|_{\infty}\leq C.
	\end{equation}
	Then we have that $a_{\delta}\circ b_{\delta}$ satisfies $\partial_{\vec{n}}a_{\delta}\circ b_{\delta}$ at all points of each $S$ and 
	\begin{equation}\label{smooth}
	a_{\delta}\circ b_{\delta} \text{ is smooth everywhere except at vertices of the simplicial complex.}
	\end{equation}
	Similarly as before we have
	\begin{equation}\label{cutie2}
	\mathcal{L}^2(\{ a_{s,S,\delta} \neq \id\}) \leq2d(S)t(S) \H^1(S)  \leq \delta \mathcal{L}^2(T)
	\end{equation}
	by the choice of $m(S)$.

	\step{5}{A map that pinches around the vertices $s$}{1E}
	
	For each $s$ vertex of the simplicial complex we define $e_{s,\delta}$ as
	$$
	e_{s,\delta}(x) = \begin{cases} s + \frac{x-s}{|x-s|} g_{\delta\eta(s)}(|x-s|) \quad & x\neq s\\
	s & x=s.\\ \end{cases}
	$$
	Then $e_{s,\delta}$ is a radial smooth mapping around $s$ with
	\begin{equation}\label{bound3}
	De_{s,\delta}(s) = 0 \text{ and } \|De_{s,\delta}(s)\|_{\infty}<C
	\end{equation}
	with $C$ independent of $s$ and $\delta$. Further $e_{s,\delta} = \id$ outside $B(s,\eta(s)\delta)$ and so by $2\eta(s) \leq \eta_2(s)$ we have
	$$
	\{e_{s,\delta} \neq \id\}\cap \{e_{s',\delta}\neq \id \} = \emptyset.
	$$
	for any $s\neq s'$. Therefore we define $e_{\delta}$ as the composition of $e_{s,\delta}$ for all $s$ independently on the order of the composition.
	\begin{equation}\label{cutie3}
	\mathcal{L}^2(\{ e_{s,\delta} \neq \id\}) \leq \mathcal{L}^2(B(s,\eta(s)\delta)) \leq \delta^2\eta^2(s) \leq \delta \mathcal{L}^2(T)
	\end{equation}
	by the choice of $\eta(s)$.

	\step{6}{Construction of the map $\Xi_{\delta}$}{1F}
	
	We define $\Xi_{\delta} = e_{\delta} \circ a_{\delta}\circ b_{\delta}$ and show that it satisfies \eqref{outer}, \eqref{ids} and \eqref{bound}. Since each of the maps we compose is injective we have that $\Xi_{\delta}$ is injective. Also each of the maps used in the construction of $\Xi_{\delta}$ sends each side $S$ onto itself and therefore so does $\Xi_{\delta}$. By the Lipschitz regularity of $a_{\delta}\circ b_{\delta}$, \eqref{bound3} and \eqref{smooth} we easilly see that $\Xi_{\delta}$ is smooth everywhere and $\partial_{\vec{n}}\Xi_{\delta} = 0$ everywhere on $\partial T$ for all $T$. Further we have by \eqref{bound1}, \eqref{bound2} and \eqref{bound3} that $\|\Xi_{\delta}\|_{\infty}<C$. For each $T$ we calculate from \eqref{cutie1}, \eqref{cutie2} and \eqref{cutie3} that $$
	\mathcal{L}^2 (\{\Xi_{\delta} \neq \id \} \cap T) \leq C\delta \mathcal{L}^2(T).
	$$
	\qed
	\section{Preliminaries}
	\begin{prop}\label{GString}
		For each $\delta >0$ the function $g_{\delta}: \er \to \er$
		$$
		g_{\delta}(x) =\begin{cases}
		\sgn(x)\big[\tfrac{2}{\delta}|x|^2 - \tfrac{1}{\delta^2}|x|^3\big] & x\in[-\delta,\delta]\\
		x&|x|>\delta.
		\end{cases}
		$$
		Then $g_{\delta}$ is $\mathcal{C}^1$ smooth and increasing on $\er$, $g_{\delta}(0) = 0$, $g_{\delta}'(0) = 0$, $g_{\delta}(\pm\delta) = \pm\delta$ and $g_{\delta}'(\pm\delta) = 1$. Further $g_{\delta}'\leq\tfrac{4}{3}$ on $\er$.
	\end{prop}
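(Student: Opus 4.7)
The plan is to reduce everything to the single polynomial piece on $[0,\delta]$ by exploiting that $g_\delta$ is odd. Set $h(x) = \frac{2}{\delta}x^2 - \frac{1}{\delta^2}x^3$ so that $g_\delta(x) = h(x)$ on $[0,\delta]$ and $g_\delta(x) = -h(-x)$ on $[-\delta,0]$. All the pointwise identities at $0$ and $\pm\delta$ are direct: $h(0)=0$, $h(\delta) = 2\delta-\delta = \delta$, $h'(x) = \frac{4}{\delta}x - \frac{3}{\delta^2}x^2$, so $h'(0)=0$ and $h'(\delta)=4-3=1$. Since $g_\delta(x)=x$ for $|x|>\delta$ has constant derivative $1$, both $g_\delta$ and $g_\delta'$ match at $\pm\delta$, giving the $\mathcal{C}^1$ regularity on all of $\er$.

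Next I would verify monotonicity. Factoring, $h'(x) = \frac{x}{\delta^2}(4\delta - 3x)$, which on $[0,\delta]$ is a product of two nonnegative factors (indeed $4\delta - 3x \ge \delta$), so $h'\ge 0$ on $[0,\delta]$. By oddness $g_\delta' \ge 0$ on $[-\delta,\delta]$, and $g_\delta'\equiv 1$ elsewhere, so $g_\delta$ is nondecreasing on $\er$; in fact strictly increasing since $h'$ vanishes only at the endpoint $x=0$ of the $\delta$-interval (a single point).

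Finally, for the uniform bound $g_\delta' \le \frac{4}{3}$, I maximize $h'$ on $[0,\delta]$. From $h''(x) = \frac{4}{\delta} - \frac{6}{\delta^2}x$, the critical point is $x_0 = \tfrac{2\delta}{3}$, where
$$h'(x_0) = \frac{4}{\delta}\cdot\frac{2\delta}{3} - \frac{3}{\delta^2}\cdot\frac{4\delta^2}{9} = \frac{8}{3} - \frac{4}{3} = \frac{4}{3}.$$
Since $h'$ is a downward parabola in $x$ with maximum $\tfrac{4}{3}$, and the outer piece contributes derivative $1 \le \tfrac{4}{3}$, combining with oddness gives $g_\delta' \le \tfrac{4}{3}$ on $\er$. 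There is no real obstacle here; the only thing one must be careful about is writing $h'$ in factored form so that the sign and the location of its maximum are both immediate.
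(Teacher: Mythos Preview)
Your proof is correct and is precisely the ``simple calculation'' that the paper declines to spell out; the paper's own proof consists of the single sentence ``Simple calculation.'' You have supplied all the details cleanly, so there is nothing to add.
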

	\begin{proof}
		Simple calculation.
	\end{proof}
	
	\begin{prop}\label{CString}
		Define  $c: \er^3 \to \er$ by
		$$
		c_{a,b}(x) =\begin{cases}
		\frac{1+ \sin(\pi\tfrac{x-a}{b-a} - \tfrac{\pi}{2})}{2}& a\leq x\leq b, 0<a<b\\
		1 & x\geq b>a
		\\ 0 & \textup{elsewhere}.
		\end{cases}
		$$
		Then 
		\begin{enumerate}[\upshape(i)]
			\item $c \in\mathcal{C}^1(M)$, where $M=\{(a,b,x):0<a<b, c\in\er\}$.
			\item If $0<a<b$ then $\partial_x c_{a,b}(x)\leq \frac{C}{b-a}$
		\end{enumerate}
	\end{prop}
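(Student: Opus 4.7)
The plan is to establish $\mathcal{C}^1$-smoothness on the open domain $M$ by examining matching across the two transition hyperplanes $\{x=a\}$ and $\{x=b\}$, and then to read off the derivative bound in (ii) directly from the explicit formula. Within each of the three open regions $\{x<a\}$, $\{a<x<b\}$, $\{x>b\}$ (intersected with $\{0<a<b\}$) the function $c_{a,b}(x)$ is plainly $\mathcal{C}^\infty$ in the three variables $(a,b,x)$: on the outer pieces it is constant, and on the middle piece the quotient $(x-a)/(b-a)$ is smooth because $b-a>0$. So the only work is at the two transition hyperplanes.

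At $x=a$ the middle piece evaluates via $\sin(-\tfrac{\pi}{2})=-1$ to $0$, matching the outer constant $0$; at $x=b$ it evaluates via $\sin(\tfrac{\pi}{2})=1$ to $1$, matching $1$. For the first partials, the key observation is that each of $\partial_x,\partial_a,\partial_b$ applied to the middle piece produces a factor of $\cos\big(\pi\tfrac{x-a}{b-a}-\tfrac{\pi}{2}\big)$ (the remaining factors being rational in $a,b,x$ with denominator a power of $b-a$, hence bounded near the transitions). This cosine vanishes at both $x=a$ and $x=b$. Since the partials from the constant outer pieces are identically zero, all three first partials extend continuously across $\{x=a\}$ and $\{x=b\}$, which yields (i).

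For (ii) one differentiates the middle piece in $x$ to obtain
$$
\partial_x c_{a,b}(x)=\frac{\pi}{2(b-a)}\cos\Big(\pi\tfrac{x-a}{b-a}-\tfrac{\pi}{2}\Big),
$$
which is bounded in modulus by $\pi/(2(b-a))$; the derivative vanishes on the two outer pieces. Thus $C=\pi/2$ suffices.

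There is no real obstacle; the entire content of the proposition is that the phase shift $-\pi/2$ in the sine is chosen precisely so that both the values and the first partials of the middle piece line up with the outer constant pieces at $x=a$ and $x=b$. Once this is observed, everything reduces to a routine computation of three partial derivatives.
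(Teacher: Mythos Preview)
Your proof is correct and is exactly the ``simple calculation'' the paper alludes to: the paper's own proof consists of the two words ``Simple calculation,'' and you have carried it out explicitly by checking value and first-partial matching at $x=a$ and $x=b$ via the vanishing cosine factor, then reading off the bound $\partial_x c_{a,b}(x)\le \pi/(2(b-a))$.
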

	\begin{proof}
		Simple calculation.
	\end{proof}

	We use $\co A$ to denote the convex hull of the set $A$, i.e. the smallest convex set containing $A$.
	
	\begin{definition}
		Let $A_1, A_2, \dots A_{n+1} \in  \rn$ be points so that the vectors $A_2 - A_1$, $A_3-A_1, \dots A_{n+1}-A_1$ are linearly independent. Then we call $T = \co \{A_1, A_2, \dots, A_{n+1}\}$, the convex hull of the points $A_1, A_2, \dots, A_{n+1}$ a simplex (or more specifically an $n$-simplex).
		
		Let $\{B_1, B_2, \dots, B_{k+1}\} \subset \{A_1, A_2, \dots A_{n+1}\}$ then we refer to $F =\co \{B_1, B_2, \dots, B_{k+1}\}$ as a $k$-subsimplex of $T$. Given $F$, a $k$-subsimplex of $T$ we denote by $N_F$ the space of vectors perpendicular to the vectors $B_2-B_1$, $B_3-B_1, \dots, B_{k+1}- B_1$.
		
		Let $\Omega\subset \rn$, by the term simplicial complex of $\Omega$ we refer to a countable (possibly finite) set $\T = \{T_1, T_2, \dots\}$ of $n$-simplices with pair-wise disjoint interiors such that $\Omega = \bigcup_{i} T_i$.
		
		We call a simplicial complex locally finite if for any $x\in \Omega$ there exists an $r>0$ such that only a finite number of simplices $T_i$ intersect $B(x,r)$.
		
		We refer to $f$ as a piecewise affine map on $\Omega$ if there exists a $\T$ simplicial complex on $\Omega$ such that $f$ is equal to some affine map $L_i(x) = M_ix + c_i$ on each $T_i \in\T$ and $f$ as a locally finite piecewise affine homeomorphism, if $\T$ is locally finite.
		
		Furthermore for a vector $v\in\er^n$ by $D_v$ ve denote the directional derivative in the direction $v$.
	\end{definition}

	\section{The construction in 3D}\label{Meat}
	
	\begin{thm}\label{SocialConstruct}
		Let $\mathcal{T} = \bigcup_{i=1}^{\infty}T_i$ be a locally finite simplicial complex on $\Omega \subset \er^3$ and let $\delta_i$ be a sequence of positive real numbers. Then there exists a $\Xi_{\delta_i}: \er^3\to \er^3$ such that
		\begin{enumerate}
			\item[$i)$] $\Xi_{\delta_i}\in\mathcal{C}^1(\er^3,\er^3)$,
			\item[$ii)$] $\Xi_{\delta_i}$ is injective,
			\item[$iii)$] $\Xi_{\delta_i}(V) = V$ for every $V$, subsimplex of any $T_i$,
			\item[$iv)$] for every subsimplex $V$ of $T$ we have that $D_{\vec{n}}\Xi_{\delta_i} = 0$ for every $\vec{n} \in N_V$,
			\item[$v)$] there exists an absoloute constant $C$ such that $|D\Xi_{\delta_i}| < C$ on $\er^3$,
			\item[$vi)$] $\Xi_{\delta_i}(x) = x$ on $T_i\setminus (\partial T_i + B(0,\delta_i))$.
		\end{enumerate}
	\end{thm}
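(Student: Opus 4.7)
The plan is to follow the 2D blueprint from Steps~\ref{1A}--\ref{1F}: construct $\Xi_{\delta_i}$ as a composition $e_{\delta_i} \circ a_{\delta_i} \circ b_{\delta_i}$ whose three factors kill, respectively, the derivatives transverse to the 2-faces, the 1-edges, and the 0-vertices of $\mathcal T$, each supported in a thin neighbourhood of the corresponding stratum. After possibly subdividing $\mathcal T$ so that no dihedral or solid angle is too large, we define, for every subsimplex $V$, combinatorial constants $\alpha(V)$, $m(V)$, $d(V)=\tfrac12 \delta_i m(V)$, $t(V)=\tan(\alpha(V)/3)$, and $\eta(s)$ at each vertex, by taking minima over the finitely many tetrahedra meeting $V$, in the spirit of Step~\ref{1B}; when $V$ borders several tetrahedra with different parameters $\delta_j$ we use the minimum. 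These are arranged so that the supports of distinct building blocks at the same stratum are pairwise disjoint and so that $\mathcal L^3(\{\Xi_{\delta_i}\neq \mathrm{id}\}\cap T_i)\leq C\delta_i\mathcal L^3(T_i)$.

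The face layer, for a 2-face $F$ with unit normal $\vec n$, is built by parametrising the $d(F)$-slab around $F$ as $y+z\vec n$ (with $y$ the orthogonal projection to the plane of $F$) and setting
$$
b_{F,\delta_i}(y+z\vec n) = y + \bigl[c_F(y)\,g_{d(F)t(F)}(z) + (1-c_F(y))\,z\bigr]\vec n,
$$
where $c_F$ is a smooth bump built from $c_{a,b}$ of Proposition~\ref{CString}, equal to $1$ on the $2d$-interior of $F$ and $0$ outside the $d$-interior, and $g_{d(F)t(F)}$ is from Proposition~\ref{GString}. This enforces $D_{\vec n} b_{F,\delta_i}\equiv 0$ on the central portion of $F$; the $b_{F,\delta_i}$ for distinct $F$ have disjoint supports by the choice of $\alpha,d$, so the composition $b_{\delta_i}$ is smooth and injective. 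For a 1-edge $E$ with direction $\vec u$, using cylindrical coordinates $(r,\phi,z)$ adapted to $E$ (so $z$ runs along $\vec u$) we set
$$
a_{E,\delta_i}(r,\phi,z) = \bigl(r,\; c_{2d,3d}(r)\,\phi + (1-c_{2d,3d}(r))\,g_{\alpha(E)/3}(\phi),\; z\bigr),
$$
composed with a further cutoff in $z$ making the map identity near the endpoints of $E$. Cross-sectionally this is exactly the planar vertex pinch $a_{s,S,\delta}$ of Step~\ref{1D}, so it preserves every 2-face containing $E$ and kills both normal directions in $N_E$ on the middle of $E$. The vertex layer
$$
e_{V,\delta_i}(x) = V + \frac{x-V}{|x-V|}\,g_{\delta_i\eta(V)}(|x-V|),
$$
is the radial map of Step~\ref{1E}, identity outside $B(V,\delta_i\eta(V))$ and with $D e_{V,\delta_i}(V)=0$.

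Setting $\Xi_{\delta_i}:=e_{\delta_i}\circ a_{\delta_i}\circ b_{\delta_i}$, injectivity is inherited factor by factor; (iii) holds because each factor sends every subsimplex onto itself; (v) follows from the uniform $L^\infty$-bounds on the derivatives of $g_\delta$ and $c_{a,b}$ given in Propositions~\ref{GString}--\ref{CString}; and (vi) is immediate from the supports. Property~(iv) is verified stratum by stratum using the chain rule: on a 2-face $F$, $b_{F,\delta_i}$ already gives $D_{\vec n}=0$ while the remaining two layers have derivative tangent to $F$ at points of $F$, preserving this; on an edge $E$, $a_{E,\delta_i}$ supplies the second transverse direction on top of the one killed by $b_{\delta_i}$ on the face interiors, so the full 2-dimensional $N_E$ is killed along $E$; at a vertex $V$, $e_{V,\delta_i}$ kills the remaining derivative entirely. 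The main obstacle is the edge layer $a_{E,\delta_i}$: one must tube the 2D angular pinch smoothly in $z$, cut it off near the endpoints of $E$ so it dovetails with the vertex layer, keep its support disjoint from the face pinches off $E$ and from the tubes of neighbouring edges, and simultaneously keep every 2-face containing $E$ setwise invariant. Once this is arranged, the composition argument of Step~\ref{1F} transposes verbatim and yields all six properties.
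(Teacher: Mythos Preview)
Your three-layer scheme has a genuine gap at the edge stratum. The map $a_{E,\delta_i}$ you write down is an \emph{angular} pinch in cylindrical coordinates about $E$; exactly as in Step~\ref{1D}, where the paper records that $a_{s,S,\delta}$ is smooth only on $\er^2\setminus\{s\}$, such a map fails to be $C^1$ on the axis $\{r=0\}=E$. Concretely, near $r=0$ your map is $(r\cos\phi,r\sin\phi)\mapsto (r\cos g_{\alpha/3}(\phi),r\sin g_{\alpha/3}(\phi))$ in each cross-section, and since $g_{\alpha/3}$ is not affine this is not differentiable at the origin. In the plane this was harmless because the singular set was the single point $s$ and the radial vertex map $e_{s,\delta}$, with $De_{s,\delta}(s)=0$, repaired it. Here the singular set is the whole one-dimensional edge $E$, whereas your final layer $e_{V,\delta_i}$ kills the derivative only at the isolated vertices $V$. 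Hence $\Xi_{\delta_i}=e\circ a\circ b$ is not $C^1$ on $E\setminus\{A,A'\}$, and claim~$(iv)$ for $V=E$ is not even well posed there. Your sentence ``$a_{E,\delta_i}$ supplies the second transverse direction on top of the one killed by $b_{\delta_i}$'' is also off: the cut-off $c_F$ vanishes on $\partial F\supset E$, so $b_{\delta_i}$ is the identity on $E$ and kills nothing there.

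This is precisely why the paper's construction needs six building blocks rather than three. In addition to a face-interior pinch, edge-angular pinches (one per pair $(F,S)$, not per edge alone) and the radial vertex map, the paper introduces a \emph{radial} perpendicular pinch $w_{S,\delta}$ along the middle of each edge and its vertex-localized analogue $H_{S,A,\delta}$; these are $C^1$ across $S$, satisfy $D_{\vec n}=0$ on $S$ for every $\vec n\in N_S$, and are what both kill $N_E$ along $E$ and restore $C^1$ regularity of the composite there. A further corner map $h_{F,S,A,\delta}$ is needed so that the face-normal is killed on \emph{all} of $F$, including the wedge near each vertex that neither your $c_F$-supported face pinch nor a tube of fixed radius about an edge covers. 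The correct pattern is: for each $k$-subsimplex one needs a map handling its interior and additional maps localized near each of its $j$-faces for $j<k$; your proposal collapses these and therefore cannot deliver $(i)$ and $(iv)$ simultaneously.
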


	Let $\T$ be a simplicial complex on $\Omega\subset \er^3$. We refer to any $F$, a $2$-subsimplex of $T\in \T$, as a face of the simplex $T$. We call $S$, a $1$-subsimplex of $F$, a $2$-subsimplex of $T$, a side of $F$ resp. a side of $T$. The 0-subsimplices of $\T$ are set of vertices $\{A_1, A_2, \dots\}$.
	
	Let $S$ be a $1$-subsimplex of $\T$ and let $F$ and $F'$ be a pair of $2$-subsimplices of $\T$ such that $S\subset F, F'$. We denote the angle between these two faces at $S$ as $\omega(F,F')$ and call
	$$
		\omega_S = \min\{\tfrac{\pi}{4}, \omega(F,F'); F\cap F' = S, \text{ $F,F'$ both 2-subsimplices of $\T$} \}.
	$$
	
	Let $A$ be a vertex of $T$ and let $S, S'$ be two sides of $T$ both supersimplices of $A$. Then by $\theta(S,S')$ we denote the angle between $S$ and $S'$. We denote
	$$
		\theta_A = \min\{\tfrac{\pi}{4}, \theta(S,S'); S\cap S' = A, \text{ $S,S'$ both 1-subsimplices of $\T$} \}.
	$$
	Further, for each face $F$ containing $A$ there are exactly two sides $A \in S' ,S \subset F$. We denote $\theta_{F,A} = \theta(S,S')$.
	
	Without loss of generality (it suffices to subdivide our original complex) we may assume that $0 < \theta_{F,A} \leq \tfrac{\pi}{2}$ for all pairs $A\subset F$ and $0<\omega(F,F')\le\tfrac{\pi}{2}$ for all pairs $F,F'\subset T \in \T$.
	
	Let $A$ be a 0-subsimplex of $\T$ then we define the number
	$$
	d_A = \min\{|A-\tilde{A}|, \ \tilde{A}\neq A \text{ is a 0-subsimplex of } \T \}.
	$$
	Similarly, let us have a 1-subsimplex $S$ of $\T$ and call $A$ and $A'$ the distinct pair of 0-subsimplices such that $A,A' \subset S$, then we define
	$$
	d_S = \min\{d_{A}, d_{A'}, \dist(S,\T_S)\},
	$$
	where $\T_S$ is the set of all 1-subsimplices of $\T$ which share no endpoints with $S$. We also define $\theta_{S} = \min\{\theta_A, \theta_{A'}, \tfrac{\pi}{4} \}$ where $A$ and $A'$ are the endpoints of $S$. Similarly, let us have a 2-subsimplex $F$ of $\T$ and call $S_1, S_2, S_3$ its sides and $A_1, A_2,A_3$ its endpoints, then we define
	$$
	d_F = \min\{d_{S_1}, d_{S_2}, d_{S_3}, \dist(F,\T_F)\},
	$$
	where $\T_F$ is the set of all 2-subsimplices of $\T$ which share no sides with $F$. Accordingly we define $\omega_F = \min\{\omega_{S_1}, \omega_{S_2}, \omega_{S_3}, \theta_{A_1},\theta_{A_2},\theta_{A_3}\}$.
	
	Let us have $A \subset S \subset F \subset T \in \T$, where $A$ is a vertex of $S$ is a side of $F$ a face of $T$. Then there exists a transformation $\psi_{F,S,A}$ (the composition of a translation with a rotation) so that $\psi_{F,S,A}(A)=(0,0,0)$, $\psi_{F,S,A}(S) \subset\er^+_0\times\{(0,0)\}$, $\psi_{F,S,A}(F)\subset \er^+_0\times \er^+_0\times\{0\}$.
	
	In preparation for the following lemma we define the following sets
	$$
	E_{F,S,A}(\omega, \delta) = \psi_{F,S,A}^{-1}(\{(x_1,r\cos\phi, r\sin \phi); x_1\in [0, \delta], r\in [0,x_1\tan(\tfrac{15\theta_{F,A}}{16})], \phi\in [-\omega,\omega]\}).
	$$

	\begin{lemma}\label{Album0}
		Let $A \in S_1, S'_1 \subset F_1 \subset T_1,T_1' \in \T$, $S'_1\neq S_1$ and $T_1 \neq T_1'$ for $\T$ a locally finite simplicial complex in $\er^3$. Further let $A\subset S_2,S_2' \subset F_2 \subset T_2,T_2' \in \T$, $S_2 \neq S_2'$ and $T_2\neq T_2'$.  Let $\delta>0$ be any number such that $3\delta <d_A$. Then there exists an angle $\alpha_{A}>0$ such that 
		\begin{equation}\label{Contained}
		E_{F_1,S_1,A}(\alpha_A, \delta) \cup E_{F_1,S_1',A}(\alpha_A, \delta) \subset  T_1\cap T_1' \cap B(A, 2\delta)
		\end{equation}
		and 
		\begin{equation}\label{Disjoint}
		\begin{aligned}
		\Big(E_{F_1,S_1,A}\big(\alpha_{A}, \delta\big) \cup E_{F_1,S_1',A}\big(\alpha_{A}, \delta\big)\Big)\cap \Big(E_{F_2,S_2,A}\big(\alpha_{A}, \delta\big)&\cup E_{F_2,S_2',A}\big(\alpha_{A}, \delta\big)\Big)\\
		& \subset  F_1\cap F_2 \cap B(A, 2\delta).
		\end{aligned}
		\end{equation}
		Also
		\begin{equation}\label{ConatinmentE}
		E_{F,S,A}(\alpha_A, \delta) \subset F+B(0,\delta).
		\end{equation}
	\end{lemma}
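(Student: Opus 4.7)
My plan is to work entirely inside the ball $B(A,3\delta)$, which by $3\delta<d_A$ contains no $0$-subsimplex other than $A$. Consequently each face $F\ni A$ is, inside this ball, precisely the planar angular sector at $A$ with opening $\theta_{F,A}$, each simplex $T\ni A$ is the corresponding solid cone, and by local finiteness only finitely many such faces and simplices meet at $A$. The full local geometry at $A$ is then encoded in a finite collection of strictly positive angles: the face angles $\theta_{F,A}$, the dihedral angles $\omega(F,F')$ along sides through $A$, and the spherical separations on the unit sphere at $A$ between any two distinct faces (or sides) meeting only at $A$. I would choose $\alpha_A>0$ strictly below every angle in this finite list and small enough that $\sin\alpha_A<\cot(15\theta_{F,A}/16)$ for each $F\ni A$; this is possible because the list is finite and all entries are strictly positive.

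With this choice, \eqref{ConatinmentE} is a direct computation in $\psi_{F,S,A}$-coordinates: for $p=(x_1,r\cos\phi,r\sin\phi)\in E_{F,S,A}(\alpha_A,\delta)$ the foot $(x_1,r\cos\phi,0)$ lies in the planar sector that coincides with $F$ inside $B(A,3\delta)$, and its distance to $p$ is $|r\sin\phi|\le\delta\tan(15\theta_{F,A}/16)\sin\alpha_A<\delta$. The containment \eqref{Contained} is similar: the set $E_{F_1,S_1,A}(\alpha_A,\delta)\cup E_{F_1,S_1',A}(\alpha_A,\delta)$ lies within angular distance $\alpha_A$ of the face $F_1$, and the two tetrahedra $T_1,T_1'$ jointly occupy every direction whose angular distance to $F_1$ at $A$ is at most $\min\{\omega(F_1,F'')\}$, ranging over the other faces of $T_1$ and $T_1'$ through $A$; shrinking $\alpha_A$ below this minimum and below what is needed to enforce $r\le\delta$ (so that $|p-A|\le 2\delta$) forces the set into the required region.

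The decisive step, and the one I expect to be the main obstacle, is \eqref{Disjoint}. Fix $F_1\ne F_2$ at $A$. If $F_1\cap F_2$ is a common side $S$, then both $E_{F_j,\cdot,A}$ lie in an $\alpha_A$-angular tube about $F_j$, and any common point has angular distance at most $\alpha_A$ from each of two planes whose dihedral angle at $S$ is at least $\omega(F_1,F_2)>2\alpha_A$; elementary trigonometry then forces its orthogonal projection onto either face to lie within angular distance $\alpha_A$ of $S$, hence the point into $F_1\cap F_2\cap B(A,2\delta)$. If instead $F_1\cap F_2=\{A\}$, the spherical separation of $F_1$ and $F_2$ at $A$ exceeds $2\alpha_A$, so the two angular tubes at $A$ are disjoint outside $A$ itself and their intersection reduces to $\{A\}\subset F_1\cap F_2$. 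The only delicate part is enumerating the possible configurations of $F_1\cap F_2$ and choosing $\alpha_A$ uniformly over the finitely many pairs $(F_1,F_2)$ at $A$; everything else is elementary spherical trigonometry.
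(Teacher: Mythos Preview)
Your overall strategy---collect the finitely many angular data at $A$ and pick $\alpha_A$ below all of them---is the right one and matches the paper. The arguments for \eqref{ConatinmentE} and \eqref{Contained} are fine as sketches (the paper argues \eqref{Contained} by a compactness argument on the positive function $\omega(x)$ rather than by a uniform dihedral bound, but your version can be made to work).

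The gap is in your treatment of \eqref{Disjoint} when $F_1\cap F_2=S$. You assert that a point lying in both ``$\alpha_A$-angular tubes'' about $F_1$ and $F_2$ has its projection close to $S$, ``hence the point [is] in $F_1\cap F_2$''. Close to $S$ is not the same as in $S$, so this inference is invalid as stated. The difficulty is that the four sets $E_{F_1,S_1,A},E_{F_1,S_1',A},E_{F_2,S_2,A},E_{F_2,S_2',A}$ are rotations about \emph{different} axes: two of them (one for each face) rotate about the common side $S$, and the other two rotate about the remaining sides $S_1',S_2'$. Only for the pair sharing the common axis $S$ does the dihedral-angle argument give intersection contained in $S$ exactly; for a mixed pair such as $E_{F_1,S_1',A}(\alpha_A,\delta)\cap E_{F_2,S,A}(\alpha_A,\delta)$ your tube argument only yields ``near $S$''. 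The paper separates these cases: for the pair around the common axis it uses the dihedral bound $\alpha_A<\tfrac{1}{3}\omega_S$ to get containment in $S$, and for each mixed pair it observes that at $\omega=0$ the two sets lie in distinct faces and meet only at $A$, then uses continuity of the distance between the sets in $\omega$ to find $\alpha_A>0$ making them disjoint away from $A$. Your argument needs either this compactness step or an explicit reason (e.g.\ that the $E$-set about $S_1'\neq S$ stays at angle at least $\tfrac{1}{16}\theta_{F_1,A}$ from $S$ inside $F_1$, which can be played off against $\alpha_A$) to exclude the mixed intersections.
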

	\begin{proof}
		
		It holds, for any $0<\delta< \tfrac{d_A}{3}$ that
		\begin{equation}\label{Oasis}
		E_{F_1,S_1,A}(0, \delta) \cup E_{F_1,S_1',A}(0, \delta) \subset  F_1 \cap B(A, 2\delta)
		\end{equation}
		using the definition of $E_{F_1,S_1,A}(\omega, \delta)$, the fact that $\theta_{A} \leq \tfrac{\pi}{4}$ and the fact that $3\delta < d_A$. At each $x$ point of $F$ with distance $r$ from $S_1$ there exists a positive angle $\omega(x)$ such that the entire arc lying in a hyperplane perpendicular to $S_1$ starting at $x$ (and going either above or below the face $F_1$) stays inside a single simplex $T_1$ (or $T_1'$). As $r$ tends to zero we have that $\omega(x) \geq \omega_{S_1}$ by the definition of $\omega_{S_1}$ as the smallest angle between neighbouring faces at $S_1$. The rest of the set $E_{F_1,S_1,A}(0, \delta)$ where we are at least $r_0$ away from $S_1$ is a compact and the function $\omega(x)$ is positive. Therefore it has a positive minimum. The same is true for all sides $S$ and all faces $F$ which intersect our vertex $A$. There are a finite number of such sides and so we get a positive angle $\tilde{\alpha}_A$ such that the arc from $x\in E_{F,S,A}(0, \delta)$ around $S$ through the angle $\tilde{\alpha}_A$ in the hyperplane perpendicular to $S$ stays inside a single $T$ (or $T'$ if we take the arc in the opposite direction), for all $F\supset S \supset A$. The combination of this fact with \eqref{Oasis} gives \eqref{Contained}.

		We now want to prove \eqref{Disjoint}. In order to do this firstly we notice that it holds when we put $\alpha_A = 0$, i.e.
		$$
		\begin{aligned}
		\Big(E_{F_1,S_1,A}\big(0, \delta\big) \cup E_{F_1,S_1',A}\big(0, \delta\big)\Big)\cap \Big(E_{F_2,S_2,A}\big(0, \delta\big)&\cup E_{F_2,S_2',A}\big(0, \delta\big)\Big)\\
		& \subset  F_1\cap F_2 \cap B(A, 2\delta)
		\end{aligned}
		$$
		since each $E_{F,S,A}(0,\delta)\subset F$ and each $E_{F,S,A}(0,\delta)\subset B(A,2\delta)$ by \eqref{Oasis}. Since we want to prove \eqref{Disjoint} for $F_1 \neq F_2$ we will always have that $F_1\cap F_2  = S$ their common side (if such a side exists) or $F_1\cap F_2  = A$.
		
		Let us first consider the case $F_1\cap F_2  = S_1$ a 1-subsimplex of $\T$. By \eqref{Contained} we know that each $E_{F,S,A}$ is contained in 2 neighbouring simplices $T$ and $T'$ sharing the face $F$. From the definition of $\omega_{S_1}$ as (at most) the smallest angle between faces meeting at $S_1$ we know that
		$$
		E_{F_1,S_1,A}(\omega, \delta) \cap E_{F_2,S_1,A}(\omega, \delta) \subset S_1
		$$
		for all $\omega < \tfrac{1}{3}\omega_{S_1}$. The other pairs are dealt with as follows. For $\omega = 0$ the pairs (i.e. $E_{F_1,S_1,A}(0, \delta)$ and $E_{F_2,S_2',A}(0, \delta)$, $E_{F_1,S_1',A}(0, \delta)$ and $E_{F_2,S_1,A}(0, \delta)$ or $E_{F_1,S_1',A}(0, \delta)$ and $E_{F_2,S_2',A}(0, \delta)$) are closed and disjoint and therefore distant. Distance of the sets is a continuous function of $\omega$ and so there is a number $0< {\alpha}_A < \tilde{\alpha}_{A}$) such that all of the above pairs are disjoint.
	\end{proof}

	Call the cylindrical coordinates $\Phi:\er\times[0,\infty)\times \er \to \er^3$ defined by $\Phi(x_1,r,\phi) = (x_1, r\cos\phi, r\sin\phi)$ and by $\Phi^{-1}$ denote the inverse of $\Phi_{|\er\times(0,\infty)\times[-\pi,\pi)}$. In the following we use the following notation to simplify our definitions; for each fixed $A\subset S \subset F \subset T \in \T$ we write
	$$
	\begin{aligned}
	y = y_{F,S,A}(x) &:= [\Phi^{-1}(\psi_{F,S,A}(x))]^1 = [\psi_{F,S,A}(x)]^1 \\
	r = r_{F,S,A}(x) &:= [\Phi^{-1}(\psi_{F,S,A}(x))]^2\\
	\phi = \phi_{F,S,A}(x) &:= [\Phi^{-1}(\psi_{F,S,A}(x))]^3\\
	\end{aligned}
	$$
	For any $A\subset S\subset F$ 0-, 1- and 2-subsimplices of $\T$ we define the map $h_{F,S,A, \delta}$ as follows. Call $t = \tfrac{1}{2}\tan\tfrac{5\theta_{F,A}}{8}$. On the set $E_{F,S,A}(\alpha_{A}, \delta)\cap \{x : 0<y<\delta, 0< r< 3yt\}$ we define
	$$
	h_{F,S,A, \delta}(x) = \psi^{-1}_{F,S,A}\Big(\Phi\big(y, r, (1-c_{\delta/2, \delta}(y))[(1-c_{2yt, 3yt}(r))g_{\alpha_A}(\phi) +  c_{2yt, 3yt}(r)\phi] + c_{\delta/2,\delta}(y)\phi \big) \Big)
	$$
	and elsewhere we define
	$$
	h_{F,S,A, \delta}(x) = x.
	$$
	
	\begin{prop}\label{HorrorMovieI}
		The mapping $h_{F,S,A, \delta}$ has the following properties
		\begin{enumerate}
			\item[$i)$] $h_{F,S,A, \delta} \in \mathcal{C}^1(\er^3 \setminus S)$,
			\item[$ii)$] $h_{F,S,A, \delta}$ is injective,
			\item[$iii)$] $h_{F,S,A, \delta}(x) = x$ on $\er^3 \setminus E_{F,S,A}(\alpha_{A}, \delta)$,
			\item[$iv)$] $h_{F,S,A, \delta}(x) = x$ on $F$,
			\item[$v)$] $D_{\vec{n}}h_{F,S,A, \delta} = 0$ on $F\cap E_{F,S,A}(\alpha_{A}, \tfrac{\delta}{2}) \cap \{r<yt\} \supset F\cap B(A,\delta/2)$, where $\vec{n} \in  N_F$ is the normal vector to $F$,
			\item[$vi)$] $\|D h_{F,S,A, \delta}\|_{\infty}<C$ independent of choice of $\alpha_A$ and $\delta$.
		\end{enumerate} 
	\end{prop}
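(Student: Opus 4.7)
I plan to work throughout in the cylindrical coordinates $(y,r,\phi)$ supplied by $\Phi^{-1}\circ\psi_{F,S,A}$, in which $h_{F,S,A,\delta}$ takes the simple form $(y,r,\phi)\mapsto(y,r,G(y,r,\phi))$, where
$$
G(y,r,\phi) = (1-c_{\delta/2,\delta}(y))\bigl[(1-c_{2yt,3yt}(r))g_{\alpha_A}(\phi) + c_{2yt,3yt}(r)\phi\bigr] + c_{\delta/2,\delta}(y)\phi
$$
on the modification region and $G=\phi$ elsewhere. Because the longitudinal and radial coordinates are preserved, most of the claims reduce to one-variable statements about $\phi\mapsto G(y,r,\phi)$ together with a matching check at the boundary of the modification region.

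I would first dispatch (ii), (iii), (iv). For each fixed $(y,r)$, $G$ is a convex combination (with coefficients in $[0,1]$) of $g_{\alpha_A}(\phi)$ and $\phi$, both strictly increasing in $\phi$ (the former by Proposition~\ref{GString}); hence $G$ is strictly increasing in $\phi$, and together with the preservation of $(y,r)$ this gives global injectivity of $h$ on $\er^3$, proving (ii). On $F$ one has $\phi=0$ and $g_{\alpha_A}(0)=0$, so $G\equiv 0$ there, giving (iv). At the outer boundary of the modification region at least one of $y\ge\delta$, $r\ge 3yt$, $|\phi|\ge\alpha_A$ holds; in each case either a cutoff has reached its endpoint or $g_{\alpha_A}(\phi)=\phi$, and a short calculation gives $G=\phi$, so $h$ matches the identity smoothly across the boundary. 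Together with smoothness of $g_{\alpha_A}$ (Proposition~\ref{GString}), of the cutoffs (Proposition~\ref{CString}), and of $\Phi$ away from the axis containing $S$, this proves both (i) and (iii).

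For (v), on the prescribed set one has $y<\delta/2$ and $r<yt<2yt$, so both cutoffs vanish and $G(y,r,\phi)=g_{\alpha_A}(\phi)$. A unit normal $\vec n\in N_F$ corresponds in $\psi_{F,S,A}$-coordinates to $\partial/\partial x_3$, which at a point of $F$ (where $\phi=0$ and $r>0$) equals $r^{-1}\partial_\phi$. Differentiating the Cartesian position $(y,r\cos G,r\sin G)$ with respect to $\phi$ at $\phi=0$ and using $g'_{\alpha_A}(0)=0$ produces the zero vector, which is $D_{\vec n}h=0$; the inclusion $F\cap B(A,\delta/2)\subset F\cap E_{F,S,A}(\alpha_A,\delta/2)\cap\{r<yt\}$ follows from the definition of $t$ and $\theta_{F,A}\le\tfrac{\pi}{2}$.

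The main obstacle is (vi), which I would prove by computing $Dh$ in cylindrical coordinates and showing that the apparently singular cutoff derivatives are absorbed by the radial factor $r$ that accompanies every angular derivative upon passing back to Cartesian coordinates. Proposition~\ref{CString} gives $|\partial_y c_{\delta/2,\delta}|\le C/\delta$ and $|\partial_r c_{2yt,3yt}|\le C/(yt)$, and a chain-rule computation (using $r\le 3yt$ on the support) gives $|\partial_y c_{2yt,3yt}(r)|\le C/y$. Each such term enters $\partial_y G$ or $\partial_r G$ multiplied by a difference $|\phi-g_{\alpha_A}(\phi)|\le\alpha_A\le\tfrac{\pi}{4}$. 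When we assemble the Cartesian Jacobian of $(y,r\cos G,r\sin G)$, every angular derivative is accompanied by a factor $r\le 3yt\le 3\delta t$, and this cancellation converts $r|\partial_y G|+r|\partial_r G|$ into a bound uniform in $y,r,\delta,\alpha_A$. The direct contribution $\partial_\phi G$ is bounded by $\|g'_{\alpha_A}\|_\infty+1\le 7/3$ via Proposition~\ref{GString}. Collecting these estimates yields $|Dh|\le C$ with $C$ independent of $\delta$ and $\alpha_A$, as claimed.
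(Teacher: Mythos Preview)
Your proposal is correct and follows essentially the same route as the paper: both work in the cylindrical coordinates $(y,r,\phi)$ where the map is $(y,r,\phi)\mapsto(y,r,G(y,r,\phi))$, reduce injectivity to the monotonicity of a convex combination of $g_{\alpha_A}$ and the identity in $\phi$, obtain $(v)$ from $g'_{\alpha_A}(0)=0$ after the cutoffs drop out, and bound the Jacobian in $(vi)$ by pairing each cutoff derivative $C/\delta$, $C/(yt)$, $C/y$ against the factor $r\le 3yt\le 3\delta$ coming from the angular part of the Cartesian differential. Your write-up is in fact slightly more explicit than the paper's about why the factor $r$ appears and how it cancels the apparent singularities, but the underlying computation is identical.
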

	
	\begin{proof}
		The proof of point $i)$ is a simple since the map is a combination of maps which are all smooth. The maps $\psi_{F,S,A}$ and $ \psi^{-1}_{F,S,A}$ are obviously smooth. The cylindrical coordinates are smooth away from $\er\times\{(0,0)\}$ and so is its inverse (note that $\psi_{F,S,A}(E_{F,S,A}(\alpha_{A}, \delta))$ stays far away from $-\pi$ and $\pi$ because we assume that angles at the vertices are small). Proposition~\ref{GString} and Proposition~\ref{CString} guarantee that the maps $g$ and $c$ are both smooth. We easily check the continuity of the partial derivatives at $\partial(E_{F,S,A}(\alpha_{A}, \delta)\cap \{x : 0<y<\delta, 0< r< 3yt\})\setminus S$ by $c_{2yt, 3yt}(3yt) = 1$ and $c_{\delta/2, \delta}(\delta) = 1$.
		
		Point $ii)$ is essentially a one-dimensional question. The map leaves $y$ and $r$ unchanged. Thus it suffices to show that for fixed $y$ and $r$ the mapping
		$$
		\varphi\mapsto (1-c_{\delta/2, \delta}(y))[(1-c_{2yt, 3yt}(r))g_{\alpha_A}(\phi) +  c_{2yt, 3yt}(r)\phi] + c_{\delta/2,\delta}(y)\phi 
		$$
		is injective. Indeed this function is a convex combination of $g_{\alpha_A}$ and $id$ which are both increasing on $[-\alpha_A, \alpha_A]$. Thus the function itself is increasing for each $y$ and $r$. Therefore we prove the injectivity of $h_{F,S,A, \delta}$.
		
		Point $iii)$ follows immediately from the definition of $h_{F,S,A,\delta}$ and the fact that $\tfrac{3}{2}\tan\tfrac{5\theta_{F,A}}{8} < \tan\tfrac{15\theta_{F,A}}{16}$. Point $iv)$ is also an immediate result of the definition, especially considering $F \subset \{\phi = 0\}$ and $g_{\alpha_A}(0) = 0$.
		
		Point $v)$ follows from the following considerations. On the set in question we have $y<\delta/2$ and $r<yt$ and therefore
		$$
		h_{F,S,A, \delta}(x) = \psi^{-1}_{F,S,A}\Big(\Phi\big(y, r, g_{\alpha_A}(\phi) \big) \Big).
		$$
		We calculate the derivative of $h_{F,S,A, \delta}$ using the chain rule. On the set $\psi_{F,S,A}(F)\subset \er^+_0\times\er^+_0\times\{0\}$ we have $\partial_y\Phi(y,r,0) = e_1$, $\partial_r\Phi(y,r,0) = e_2$, $\partial_{\phi}\Phi(y,r,0) = e_3$. Since the vector $\vec{n}$ is mapped by $\psi_{F,S,A}$ onto $\pm e_3$, and because $$
		\partial_y\Phi(y,r,0)\bot \psi_{F,S,A}(\vec{n}) = \pm e_3 \bot \partial_r\Phi(y,r,0)
		$$
		we have that
		$$
		\partial_{\vec{n}} h_{F,S,A, \delta}(x) = D\psi^{-1}_{F,S,A}\Big(\Phi\big(y, r, g_{\alpha_A}(\phi) \big) \Big)\partial_{\phi}\Phi\big(y, r, g_{\alpha_A}(\phi) \big).
		$$
		We have that $\partial_{\phi}\Phi\big(y, r, g_{\alpha_A}(\phi) \big) = g'_{\alpha_A}(0)r e_3$ but by Proposition~\ref{GString} we have that $g_{\alpha_A}'(0) = 0$, which yields (recall $|D\psi|=1$) claim $v)$.
		
		To prove point $vi)$ we calculate
		$$
		|\partial_{\phi}h_{F,S,A, \delta}(x)| = r \Big( (1-c_{\delta/2, \delta}(y))[(1-c_{2yt, 3yt}(r))g'_{\alpha_A}(\phi) +  c_{2yt, 3yt}(r)] + c_{\delta/2,\delta}(y) \Big) \leq C
		$$
		further
		$$
		|\partial_r h_{F,S,A, \delta}(x)| \leq \Big(1+ \tfrac{Cr |g_{\alpha_A}(\phi) - \phi|}{yt} \Big) \leq C
		$$
		and finally using Proposition~\ref{CString} we get
		$$
		|\partial_y h_{F,S,A, \delta}(x)| = \Big(1+ \tfrac{C|g_{\alpha_A}(\phi)-\phi|r}{\delta} + C\Big) \leq C
		$$
		because $r < \delta$.
	\end{proof}

	Recall $\theta_A =\min\{\tfrac{\pi}{4}, \theta_{S,S'}; A\subset S, S' \subset F\}$. 
	
	\begin{lemma}\label{AlbumI}
		For each side $S$ with endpoint $A$ and for each $0<\delta<\tfrac{d_A}{4}$ we define the set
		$$
		L_{S, A}(\delta) = \{x\in \er^3 :y\in(0,\delta), r \in (0,y \tan(\tfrac{1}{3}\theta_A)), \phi\in[-\pi,\pi) \}.
		$$
		Then for each $\tilde{A} \subset \tilde{S}$ 0-subsimplex and 1-subsimplex of $\T$ it holds that either
		$$
		(\tilde{A} = A \text{ and } \tilde{S} = S) \text{ or } L_{S,A}(\delta) \cap L_{\tilde{S} ,\tilde {A}}(\tilde{\delta}) = \emptyset.
		$$
		Also
		\begin{equation}\label{ConatinmentL}
		L_{S,A}(\delta) \subset S+B(0,\delta).
		\end{equation}
	\end{lemma}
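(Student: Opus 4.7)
The plan is to recognise $L_{S,A}(\delta)$ as a rotationally symmetric open truncated cone around $S$ with apex $A$: since $\phi$ ranges over the full interval $[-\pi,\pi)$, the set depends only on $S$ and $A$ (not on any auxiliary face $F$), and a point with coordinates $(y,r,\phi)$ lies in $L_{S,A}(\delta)$ iff its orthogonal projection onto the line through $S$ sits at axial distance $y\in(0,\delta)$ from $A$ and its distance $r$ from that line is strictly smaller than $y\tan(\theta_A/3)$. With this description, the containment \eqref{ConatinmentL} is immediate: one has $r < y\tan(\theta_A/3) \leq y < \delta$ since $\theta_A \leq \pi/4$, and because $\delta < d_A/4 \leq |S|/4$ the foot actually lies on the segment $S$, so the point sits within distance $\delta$ of $S$.

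For the disjointness I would split according to whether $A = \tilde{A}$. In the case $A = \tilde{A}$ (so necessarily $S \neq \tilde{S}$) one has $\theta_{\tilde{A}} = \theta_A$; any $x \in L_{S,A}(\delta)$ satisfies $\arctan(r/y) < \theta_A/3$, which is precisely the angle that the ray from $A$ through $x$ makes with $S$. If $x$ also lay in $L_{\tilde{S},\tilde{A}}(\tilde\delta)$, the same bound would hold for the angle between $Ax$ and $\tilde{S}$, so the spherical triangle inequality applied at $A$ would force $\theta(S,\tilde{S}) < 2\theta_A/3$, contradicting the inequality $\theta(S,\tilde{S}) \geq \theta_A$ built into the definition of $\theta_A$.

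In the remaining case $A \neq \tilde{A}$ I would use a crude diameter bound. Every $x \in L_{S,A}(\delta)$ satisfies $|x - A|^2 = y^2 + r^2 \leq \delta^2 \sec^2(\theta_A/3) \leq 2\delta^2$, hence $|x - A| \leq \sqrt{2}\,\delta$; analogously $|x - \tilde{A}| \leq \sqrt{2}\,\tilde\delta$ for every $x \in L_{\tilde{S},\tilde{A}}(\tilde\delta)$. Since $\tilde{A}$ is a vertex distinct from $A$, we have both $d_A \leq |A - \tilde{A}|$ and $d_{\tilde{A}} \leq |A - \tilde{A}|$, so a common point would force $|A - \tilde{A}| \leq \sqrt{2}(\delta + \tilde\delta) < \sqrt{2}(d_A + d_{\tilde{A}})/4 \leq \sqrt{2}\,|A - \tilde{A}|/2 < |A - \tilde{A}|$, a contradiction.

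The argument is largely bookkeeping; the main thing to be careful about is ensuring the case split covers every configuration (including those where $S$ and $\tilde{S}$ share exactly one endpoint, or none at all), which the distance estimate handles uniformly as soon as $A \neq \tilde{A}$. The angular case is the only place where the hypothesis $\theta_A \leq \pi/4$ is genuinely used, both to secure $\sec(\theta_A/3) \leq \sqrt{2}$ and to apply the spherical triangle inequality directly without any antipodal subtleties.
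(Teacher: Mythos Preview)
Your proof is correct and follows essentially the same approach as the paper: the same case split on whether $A=\tilde{A}$, the same cone/angle argument when $A=\tilde{A}$ (the paper phrases it as ``cones with half-angle $\theta_A/3$ cannot overlap'' rather than invoking the spherical triangle inequality explicitly), and the same ball-containment argument when $A\neq\tilde{A}$ (the paper uses the slightly looser bound $L_{S,A}(\delta)\subset B(A,2\delta)\subset B(A,d_A/2)$ in place of your $\sqrt{2}\,\delta$). The containment $L_{S,A}(\delta)\subset S+B(0,\delta)$ is handled identically via $r<y\tan(\theta_A/3)<y<\delta$.
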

	\begin{proof}
		On one hand, in the first case $A \neq \tilde{A}$ and then, because $L_{S,A}(\delta) \subset B(A,2\delta) \subset B(A, \tfrac{1}{2}d_A)$) and by the definition of $d_A$ we know that $B(A, \tfrac{1}{2}d_A) \cap B(\tilde{A}, \tfrac{1}{2}d_{\tilde{A}}) = \emptyset$. On the other hand if $A = \tilde{A}$ and $S \neq \tilde{S}$ then we note that the set $L_{S,A}(\delta)$ is contained in a cone originating at $A$ centred around $S$ under the angle $\tfrac{\theta_A}{3}$. By the definition of the angle $\theta_A$ these cones cannot intersect elsewhere than at $A$. On the other hand $A\notin L_{S,A}(\delta)$ and so the two sets are disjoint.
		
		The inclusion \eqref{ConatinmentL} follows from the facts that $L_{S,A}(\delta)$ is contained in a cone around $S$ with angle $\tfrac{\theta_A}{3}$ originating at $A$, further $ y\tan \tfrac{\theta_A}{3} < y\tan\theta_A < y$ and $y<\delta$.
	\end{proof}
	
	For any positive $\delta$ such that $4\delta<d_A$ we choose an arbitrary face $F$ containing $S$ and define
	$$
	H_{S,A,\delta}(x) =
	\psi^{-1}_{F,S,A}\Big(\Phi\big(y, (1-c_{\delta/2, \delta}(y))g_{y \tan( \tfrac{1}{3} \theta_A)}(r) + c_{\delta/2,\delta}(y) r, \phi \big) \Big)
	$$
	on the set $L_{S,A}(\delta)$ and
	$$
	H_{S,A,\delta}(x) = x
	$$
	for $x\in \er^3\setminus L_{S,A}(\delta)$.
	
	\begin{prop}\label{Sunset}
		The mapping $H_{S,A, \delta}$ has the following properties
		\begin{enumerate}
			\item[$i)$] $H_{S,A, \delta} \in \mathcal{C}^1(\er^3 \setminus A)$,
			\item[$ii)$] $H_{S,A, \delta}$ is injective,
			\item[$iii)$] $H_{S,A, \delta}(x) = x$ on $\er^3 \setminus L_{S,A}(\delta) $,
			\item[$iv)$] $H_{S,A, \delta}(x) = x$ on $S$,
			\item[$v)$] $D_{\vec{n}}H_{S,A, \delta} = 0$ on $S \cap \{0<y<\tfrac{\delta}{2}\}$, for any $\vec{n}$ perpendicular to $S$,
			\item[$vi)$] $\|D H_{S,A, \delta}\|_{\infty}<C$ independent of $\delta$.
		\end{enumerate} 
	\end{prop}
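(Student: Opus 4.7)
\medskip

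The plan is to mirror the argument used for Proposition~\ref{HorrorMovieI}, since the construction of $H_{S,A,\delta}$ is structurally analogous to that of $h_{F,S,A,\delta}$: a smooth convex-combination cutoff $c_{\delta/2,\delta}(y)$ damps a radial pinching function $g_{y\tan(\theta_A/3)}(r)$, with identity taken over where the cutoff vanishes. I would set $t' := \tan(\tfrac{1}{3}\theta_A)$ throughout and work entirely in the transformed cylindrical coordinates $(y,r,\phi)$ around $S$ via $\psi_{F,S,A}$ and $\Phi$, treating $\psi_{F,S,A}$ as a rigid motion (so its differential is an isometry, allowing me to ignore it when estimating norms).

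For $i)$, I would check that outside of $L_{S,A}(\delta)$ the map is identity and matches smoothly across $\partial L_{S,A}(\delta)$: at $r = y t'$ we have $g_{y t'}(r) = r$ and $g'_{y t'}(r) = 1$ by Proposition~\ref{GString}, so the radial modification and its $r$-derivative continue identically; at $y = \delta$ we have $c_{\delta/2,\delta}(\delta) = 1$ and $c'_{\delta/2,\delta}(\delta) = 0$ by Proposition~\ref{CString}, again giving a $\mathcal{C}^1$ match. The only cylindrical-coordinate singularity is along $r=0$, but on $\{r=0\}\cap L_{S,A}(\delta)$ the map is just $(y,0,\phi)\mapsto(y,0,\phi)$ (since $g(0)=0$), and I would verify Cartesian $\mathcal{C}^1$-smoothness there directly using that $g_{yt'}(r) = O(r^2)$ for small $r$, exactly as at the end of the proof of Proposition~\ref{HorrorMovieI}, point $i)$. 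This leaves only the point $A$ (i.e.\ $y=0$) as a genuine singularity, since the parameter $yt'$ of $g$ collapses to zero. For $ii)$, I fix $y,\phi$ and observe that $r\mapsto (1-c_{\delta/2,\delta}(y))g_{yt'}(r) + c_{\delta/2,\delta}(y) r$ is a convex combination of two strictly increasing functions (by Proposition~\ref{GString}), hence strictly increasing, which gives injectivity slice-by-slice and therefore globally; points of $L_{S,A}(\delta)$ are sent into $L_{S,A}(\delta)$ (since radial coordinate is not increased past $r$), so no conflict with the identity region. Items $iii)$ and $iv)$ are immediate from the definition, using $g_{yt'}(0) = 0$ for $iv)$.

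The key point is $v)$. On $S\cap\{0<y<\delta/2\}$ we have $c_{\delta/2,\delta}(y)=0$, so the map reduces to $(y,r,\phi)\mapsto(y,\,g_{yt'}(r),\,\phi)$. Writing it in the Cartesian coordinates of $\psi_{F,S,A}$ as $(x_1,x_2,x_3)\mapsto(x_1,\,\rho(x_1,r)x_2,\,\rho(x_1,r)x_3)$ with $\rho(y,r) := g_{yt'}(r)/r$, the explicit cubic formula for $g_{yt'}$ gives $\rho(y,r) = \tfrac{2r}{yt'} - \tfrac{r^2}{(yt')^2}$ for $r\le yt'$, which is $O(r)$ as $r\to 0$. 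Hence the perpendicular components of $H_{S,A,\delta}$ vanish to second order along $S$, so every partial derivative in a direction $\vec{n}\perp S$ is zero on $S$. Equivalently, this is Proposition~\ref{GString}'s statement $g'_{yt'}(0)=0$ translated through the chain rule, paralleling the calculation in point $v)$ of Proposition~\ref{HorrorMovieI}.

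Finally, for $vi)$, I would estimate the three partials. The $\phi$-derivative is trivial as $H$ acts as identity on $\phi$. For $\partial_r$, the coefficient is $(1-c_{\delta/2,\delta}(y))g'_{yt'}(r) + c_{\delta/2,\delta}(y)$, bounded by the $\tfrac{4}{3}$-bound of Proposition~\ref{GString}. The only real calculation is $\partial_y$, where I expect the main obstacle: both the cutoff $c_{\delta/2,\delta}$ and the parameter $yt'$ inside $g$ depend on $y$, producing a term $(\partial_y c_{\delta/2,\delta})(r - g_{yt'}(r))$ and a term $(1-c_{\delta/2,\delta})\,\partial_y g_{yt'}(r)$. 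For the first, $|\partial_y c_{\delta/2,\delta}|\le C/\delta$ (Proposition~\ref{CString}) but the factor is active only on $y\in(\delta/2,\delta)$, where $r\le yt'\le y\le\delta$ gives $|r-g_{yt'}(r)|\le C\delta$, so the product is $O(1)$; for the second, direct differentiation of the cubic formula for $g_{yt'}$ with respect to $y$ shows $|\partial_y g_{yt'}(r)|$ is bounded by an absolute constant on $\{r\le yt'\}$. Collecting these bounds yields $\|DH_{S,A,\delta}\|_{\infty}\le C$ independently of $\delta$.
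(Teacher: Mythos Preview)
Your proposal is correct and follows essentially the same approach as the paper's proof: the paper also treats $H_{S,A,\delta}$ as a direct analogue of $h_{F,S,A,\delta}$, checks the $\mathcal{C}^1$ matching at $\partial L_{S,A}(\delta)$ via $g'_{yt'}(yt')=1$ and $c_{\delta/2,\delta}(\delta)=1$, $c'_{\delta/2,\delta}(\delta)=0$, proves injectivity by the convex-combination-of-increasing-functions argument for fixed $(y,\phi)$, and obtains $v)$ from $g'_{yt'}(0)=0$ (the paper phrases this last point as computing $\partial_{u_2}H=\partial_{u_3}H=0$ in the basis $u_i=\psi_{F,S,A}^{-1}(e_i)$, which is your $\rho=O(r)$ observation rewritten). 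Your treatment of $vi)$ is in fact more explicit than the paper's, which simply refers back to Proposition~\ref{HorrorMovieI}; one small inaccuracy is your pointer ``exactly as at the end of the proof of Proposition~\ref{HorrorMovieI}, point $i)$'' for smoothness along $S$, since that proposition excludes $S$ from its $\mathcal{C}^1$ claim and never addresses $r=0$ --- but your own argument via $g_{yt'}(r)=O(r^2)$ is self-contained and correct.
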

	\begin{proof}
		The proof is similar to the proof of Proposition~\ref{HorrorMovieI}. Point $i)$ is a simple case of calculation. On the set $L_{S,A}(\delta) \setminus S$ the continuous derivative with respect to $y$ exists thanks to Proposition~\ref{GString}, Proposition~\ref{CString} and basic calculus. Further, on $L_{S,A}(\delta) \setminus S$, we calculate that the $r$ derivative is continuous thanks to Proposition~\ref{GString} and $\partial_{\phi}$ is constant. Now it is enough to check that the partial derivitives are continuous at $\partial L_{S,A}(\delta) \setminus S$. Since $g'_{y \tan(\tfrac{1}{3} \theta_A)}(y \tan(\tfrac{1}{3} \theta_A)) = 1$, $c_{\delta/2,\delta}(\delta) = 1$ and $c'_{\delta/2,\delta}(\delta) = 0$ this is easily checked.
		
		The derivative of $H_{S,A, \delta}$ on $S$ is easy to calculate, we use the basis
		$$
		\{u_1,u_2,u_3\} = \{\psi^{-1}_{F,S,A}(e_1), \psi^{-1}_{F,S,A}(e_2), \psi^{-1}_{F,S,A}(e_3)\}.
		$$
		Then $\partial_{u_1}H_{S,A, \delta}(x) = u_1$ and $\partial_{u_2}H_{S,A, \delta}(x) =\partial_{u_3}H_{S,A, \delta}(x) =0$ for any point $x \in S \cap \{0<y<\tfrac{\delta}{2}\}$. In fact 
		\begin{equation}\label{Spandex}
		\text{for any vector }v\in\spn\{u_2,u_3\} \text{ it holds } \partial_v H_{S,A,\delta}(x) = 0, x \in S \cap \{0<y<\tfrac{\delta}{2}\}.
		\end{equation}
		Therefore we have that $DH_{S,A,\delta}$ is continuous up to and including $\partial L_{S,A}(\delta)\setminus \{A\}$ which is point $i)$. Already we have proved point $v$ in \eqref{Spandex}.
		
		The injectivity claim of $ii)$ is easily checked. Clearly for any pair $(y,r,\phi)$ and $(y',r',\phi')$ where either $y\neq y'$ or $\phi \neq \phi'$ then $H_{S,A,\delta}(x)\neq H_{S,A,\delta}(x')$. Therefore it suffices to check that for any fixed $0<y<\delta$ and any $\phi\in(-\pi,\pi]$ that 
		$$
		r\mapsto(1-c_{\delta/2, \delta}(y))g_{y \tan( \tfrac{1}{3} \theta_A)}(r) + c_{\delta/2,\delta}(y) r
		$$
		is increasing. Because $g$ and $\id$ are both increasing and the given function is their convex combination this is true.
		
		Points $iii)$ and $iv)$ are obvious from the definition of $H_{S,A,\delta}$. The calculations for $v)$ and $vi)$ are the similar as in Proposition~\ref{HorrorMovieI} 
	\end{proof}
	
	For each $A$ 0-subsimplex of $\T$ we define the map $G_{A,\delta}$, for any positive $\delta$ such that $3\delta<d_A$, as follows
	$$
	G_{A,\delta}(x) =\begin{cases} A + \frac{x-A}{|x-A|}g_{\delta}(|x-A|) \quad &x\neq A\\
	A \quad &x=A.
	\end{cases}
	$$
	\begin{prop}\label{Lollipop}
		The mapping $G_{A, \delta}$ has the following properties
		\begin{enumerate}
			\item[$i)$] $G_{A, \delta} \in \mathcal{C}^1(\er^3)$,
			\item[$ii)$] $G_{A, \delta}$ is injective,
			\item[$iii)$] $G_{A, \delta}(x) = x$ on $\er^3 \setminus B(A,\delta) $,
			\item[$iv)$] $G_{A, \delta}(V) = V$ for every $V$ subsimplex of  $\T$,
			\item[$v)$] $DG_{A, \delta}(A) = 0$,
			\item[$vi)$] $\|D G_{A, \delta}\|_{\infty}<C$ independent of $\delta$.
		\end{enumerate} 
	\end{prop}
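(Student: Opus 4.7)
The plan is to exploit the radial structure of $G_{A,\delta}$, reducing essentially every property to properties of the one-variable function $g_\delta$ established in Proposition~\ref{GString}. Writing $r = |x - A|$ and $u = (x - A)/r$ for $x \neq A$, we have $G_{A,\delta}(x) = A + g_\delta(r)\, u$, and a direct computation yields
$$
DG_{A,\delta}(x) = g_\delta'(r)\, u \otimes u + \frac{g_\delta(r)}{r}\,(I - u \otimes u).
$$
This is a symmetric matrix with eigenvalues $g_\delta'(r)$ (once) and $g_\delta(r)/r$ (twice), so its operator norm equals $\max(|g_\delta'(r)|,\, g_\delta(r)/r)$.

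For smoothness (i), the derivative at $A$ (v), and the bound (vi), everything follows from the behavior of $g_\delta$ near the origin together with its uniform bounds. As $r \to 0^+$, both $g_\delta'(r) \to 0$ (Proposition~\ref{GString}) and $g_\delta(r)/r = 2r/\delta - r^2/\delta^2 \to 0$, so $DG_{A,\delta}$ extends continuously to $A$ with value $0$, giving $G_{A,\delta} \in \mathcal{C}^1(\er^3)$ and $DG_{A,\delta}(A) = 0$. For (vi), combine $|g_\delta'(r)| \leq 4/3$ with the identity $g_\delta(r) - r = -r(1 - r/\delta)^2$ on $[0,\delta]$, which yields $0 \leq g_\delta(r)/r \leq 1$; hence $\|DG_{A,\delta}\|_\infty \leq 4/3$ uniformly in $\delta$.

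Properties (ii) and (iii) are elementary: (iii) holds since $g_\delta(r) = r$ for $r \geq \delta$, and (ii) follows because $G_{A,\delta}$ preserves each ray from $A$ and acts on it by the strictly increasing map $g_\delta$, so distinct points on the same ray have distinct images while distinct rays have images on distinct rays. For (iv), consider first a subsimplex $V$ containing $A$; then $V$ is convex with $A$ as a vertex, so every point $x \in V$ lies on a segment from $A$ inside $V$, and $g_\delta(r) \leq r$ places $G_{A,\delta}(x)$ on that same segment, giving the inclusion $G_{A,\delta}(V) \subseteq V$; the reverse inclusion follows by inverting $g_\delta$ along each ray, using the standing smallness of $\delta$ relative to the local geometry at $A$ to keep the preimage inside $V$. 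If $V$ does not contain $A$, the same smallness assumptions ensure $V \cap B(A,\delta) = \emptyset$, so $G_{A,\delta}$ is the identity on $V$.

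The only delicate step is the $\mathcal{C}^1$-regularity at the single point $A$: the radial decomposition of $DG_{A,\delta}$ exhibits two independent scalars, $g_\delta'(r)$ and the quotient $g_\delta(r)/r$, whose simultaneous vanishing as $r \to 0^+$ is what produces a continuous extension of the derivative to the origin, and the condition $g_\delta'(0) = 0$ from Proposition~\ref{GString} is precisely what forces both of them to tend to zero.
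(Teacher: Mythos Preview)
Your proof is correct and follows the same radial-reduction strategy as the paper, reducing each property of $G_{A,\delta}$ to the corresponding one-variable property of $g_\delta$ from Proposition~\ref{GString}. Your argument is considerably more explicit than the paper's one-sentence-per-item proof, in particular through the derivative formula $DG_{A,\delta}(x)=g_\delta'(r)\,u\otimes u+\tfrac{g_\delta(r)}{r}(I-u\otimes u)$ and the resulting uniform bound $\|DG_{A,\delta}\|_\infty\le 4/3$, but the underlying ideas are identical.
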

	\begin{proof}
		Point $i)$ follows from $G_{A,\delta}$ being a radial map constructed using smooth $g_{\delta}$ and point $ii)$ comes from $g$ being increasing. Point $iii)$ is because $g_{\delta}(r) = r$ for $r>\delta$. Point $iv)$ is thanks to small choice of $\delta$ so that $B(A,\delta)$ intersects only those simplices that intersect $A$. Point $v)$ is because $g_{\delta}'(0) = 0$. Point $vi)$ follows directly from Proposition~\ref{GString}.
	\end{proof}
	
	\begin{lemma}\label{Lemmings}
		Let $S$ be a 1-subsimplex of $\T$ contained in $F$ a 2-subsimplex of $\T$. We denote the endpoints of $S$ by $A$ and $A'$ vertices of $\T$ and $\ell_S = |A-A'| = \H^1(S)$. For each $\delta < \tfrac{d_S}{4}$ and each $\omega$ we define the the sets
		$$
		\begin{aligned}
		D_{F,S}(\delta) = \Big\{x\in& \er^3; x = \psi^{-1}_{F,S,A}(\Phi(y,r,\phi));\\
		& y\in[\tfrac{\delta}{4}, \ell_S - \tfrac{\delta}{4}], r\in[0,\tfrac{1}{4}\delta\tan(\tfrac{1}{3}\theta_{S})], \phi\in[-\tfrac{\omega_S}{3}, \tfrac{\omega_S}{3}]\Big\}.
		\end{aligned}
		$$
		Then for any $S,\tilde{S}$ 1-subsimplices of $\T$ and any $F, \tilde{F}$ 2-subsimplices of $\T$, $S\subset F$ and $\tilde{S} \subset \tilde{F}$ exactly one of the following holds
		\begin{enumerate}
			\item $F = \tilde{F}$ and $S = \tilde{S}$
			\item $F \neq \tilde{F}$ and $S = \tilde{S}$ implying 
			$$
			D_{F,S}(\delta) \cap D_{\tilde{F},\tilde{S}}(\tilde{\delta}) \subset S
			$$
			\item $S \neq \tilde{S}$ and
			$$
			D_{F,S}(\delta) \cap D_{\tilde{F},\tilde{S}}(\tilde{\delta}) = \emptyset.
			$$
		\end{enumerate}
		Further
		\begin{equation}\label{ConatinmentD}
		D_{F,S}(\delta) \subset S+B(0,\tfrac{1}{4}\delta).
		\end{equation}
	\end{lemma}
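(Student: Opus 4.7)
The plan is to first establish the containment \eqref{ConatinmentD} and then address the three cases, which essentially amount to distance/angle separation arguments built from the constants $d_S$, $\theta_S$ and $\omega_S$ defined before the statement.

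For \eqref{ConatinmentD}, I would simply note that if $x\in D_{F,S}(\delta)$ has cylindrical coordinates $(y,r,\phi)$ with $y\in[\delta/4,\ell_S-\delta/4]$, then the point $\psi^{-1}_{F,S,A}(\Phi(y,0,0))$ lies on $S$, and its distance to $x$ equals $r\le\tfrac14\delta\tan(\tfrac13\theta_S)<\tfrac14\delta$, using $\theta_S\le\pi/4$. Case (1) is definitional, so the real work is in cases (2) and (3).

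For case (2) (same $S$, different faces), both $D_{F,S}(\delta)$ and $D_{\tilde F,S}(\tilde\delta)$ are contained in the cylindrical wedges of half-angle $\omega_S/3$ around $F$ and $\tilde F$ respectively, measured in the plane perpendicular to $S$. Since the dihedral angle $\omega(F,\tilde F)$ at $S$ is at least $\omega_S$ (by definition of $\omega_S$), and both wedges have angular width $\omega_S/3<\omega(F,\tilde F)/2$, they can intersect only at $r=0$, i.e.\ on $S$. The fact that the complementary gap $2\pi-\omega(F,\tilde F)$ is also large enough is automatic under the standing assumption $\omega(F,F')\le\pi/2$.

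For case (3), I would split into two subcases. If $S$ and $\tilde S$ share no endpoint, then $\dist(S,\tilde S)\ge d_S$ and also $\ge d_{\tilde S}$ by symmetry; combined with \eqref{ConatinmentD}, the two sets live within $\tfrac14\delta$ respectively $\tfrac14\tilde\delta$ of their sides, which is far less than $\max(d_S,d_{\tilde S})$ given the hypotheses $\delta<d_S/4$, $\tilde\delta<d_{\tilde S}/4$, so the sets are disjoint. If instead $S$ and $\tilde S$ share an endpoint $A$, I use the key angular bound: for $x\in D_{F,S}(\delta)$, the angle subtended at $A$ from $S$ is at most $\arctan(r/y)\le\arctan(\tan(\tfrac13\theta_S))=\tfrac13\theta_S\le\tfrac13\theta_A$, since $\theta_S\le\theta_A$ by definition. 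The analogous bound holds for $\tilde S$, giving a total angular spread of at most $\tfrac{2}{3}\theta_A$ between the two cones, while the angle between $S$ and $\tilde S$ at $A$ is at least $\theta_A$. Hence the two cones only meet at $A$ itself, and since $y\ge\delta/4>0$ neither set contains $A$. The main obstacle I anticipate is being careful that the rigid choice $r\le\tfrac14\delta\tan(\tfrac13\theta_S)$ (independent of $y$) together with the lower bound $y\ge\delta/4$ is exactly what is needed for the cone half-angle to be $\tfrac13\theta_S$ rather than growing with $y$; once this is checked, the separation estimates are routine.
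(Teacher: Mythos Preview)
Your proposal is correct and follows essentially the same route as the paper: first the containment \eqref{ConatinmentD} via $\tan(\tfrac13\theta_S)<1$, then disjointness for $S\cap\tilde S=\emptyset$ from the definition of $d_S$, the shared-endpoint case by a cone argument at $A$ (the paper phrases it as containment in a cone of half-angle $\theta_S/2$ via an intermediate union of balls, while you get $\theta_S/3$ directly from $r/y\le\tan(\tfrac13\theta_S)$, but this is cosmetic), and the shared-side case by the $\omega_S/3$ wedge separation. The only imprecision is your parenthetical remark that the cone half-angle might ``grow with $y$''---in fact $\arctan(r/y)$ is largest at the minimal $y=\delta/4$, which is exactly what makes your bound sharp there---but this does not affect the argument.
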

	
	\begin{proof}
		The inclusion \eqref{ConatinmentD} is obvious since $\tan(\tfrac{1}{3}\theta_{S}) < \tan(\theta_{S}) \leq 1$. From here, by the definition of $d_S$ and $2\delta < d_S$ we easily see, when $S \cap \tilde{S} = \emptyset$ then
		$$
		D_{F,S}(\delta) \cap D_{\tilde{F},\tilde{S}}(\tilde{\delta}) = \emptyset.
		$$
		
		On the other hand if $\{A\} = S \cap \tilde{S}$ then we reason as follows. It is not hard to calculate that
		$$
			\bigcup_{x\in S\setminus A} B(x,\tfrac{1}{4}|x-A|\tan(\tfrac{1}{3}\theta_S)) \Subset K_{A, S, \theta_S/2}
		$$
		where
		$$
			K_{A, S, \theta_S/2} = \psi^{-1}_{F,S,A}(\Phi(y,r,\phi)); y>0, r\in[0,y\tan(\tfrac{1}{2}\theta_{S})), \phi\in(-\pi, \pi]\Big\}
		$$
		is the cone with vertex at $A$ along $S$ and with angle to its axis of $\theta_S/2$. By the definition of $\theta_S$ these cones are disjoint. Therefore we have $D_{F,S}(\delta) \cap D_{\tilde{F},\tilde{S}}(\tilde{\delta}) = \emptyset$.

		Finally we consider the case that $S = \tilde{S}$. Each point of $D_{F,S} \setminus S$ can be described as the rotation around $S$ of a point in $D_{F,S} \cap F$ with the rotation of at most $\tfrac{1}{3}\omega_S$ on either side of $F$. The same holds for $D_{F',S}$. Since the angle between the faces $F$ and $F'$ at $S$ is at least $\omega_S$. Therefore the rotation of $\tfrac{1}{3}\omega_S + \tfrac{1}{3}\omega_S<\omega_S$ is so small that the sets $D_{F,S}$ and $D_{F',S}$ can only meet at $S$.
	\end{proof}
	
	We define the mapping $b_{F,S, \delta}$ for any side $S$ and any $0<\delta<\tfrac{d_S}{4}$ as follows. Call $t = \tan(\tfrac{1}{3}\theta_{S})$. On the set $(\er^3\setminus D_{F,S}(\delta)) \cup S$ we define
	$$
	b_{F,S, \delta}(x) = x.
	$$
	To simplify notation call $c_1(y) = c_{\delta/4, \delta/2}(y) - c_{\ell_S-\delta/2, \ell_S-\delta/4}(y)$ and $c_2(r) =c_{\delta t/8,\delta t/4}(r) $. On the set $D_{F,S}(\delta)$ we define
	$$
	b_{F,S, \delta}(x) =\psi_{F,S,A}^{-1}\Big(\Phi\big(y, r, c_1(y)[(1-c_2(r))g_{\omega_{S}}(\phi) + c_2(r)\phi]  + (1 - c_1(y))\phi\big) \Big).
	$$
	
	\begin{prop}\label{Aaaggrr}
		The mapping $b_{F,S, \delta}$ has the following properties
		\begin{enumerate}
			\item[$i)$] $b_{F,S, \delta} \in \mathcal{C}^1(\er^3 \setminus S)$,
			\item[$ii)$] $b_{F,S, \delta}$ is injective,
			\item[$iii)$] $b_{F,S, \delta}(x) = x$ on $\er^3 \setminus D_{F,S}(\delta)$,
			\item[$iv)$] $b_{F,S, \delta}(x) = x$ on $F\cap D_{F,S}(\delta) \supset S$,
			\item[$v)$] $D_{\vec{n}}b_{F,S, \delta} = 0$ on $F \cap \{x : y\in[\tfrac{\delta}{2}, \ell_S - \tfrac{\delta}{2}] , 0\leq r \leq \tfrac{1}{8}\delta t \}$, where $\vec{n}$ is the normal vector to $F$,
			\item[$vi)$] $\|D b_{F,S, \delta}\|_{\infty}<C$ independent of $\delta$.
		\end{enumerate} 
	\end{prop}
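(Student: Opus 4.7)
The argument will mirror Propositions \ref{HorrorMovieI} and \ref{Sunset}, exploiting the fact that $b_{F,S,\delta}$ preserves the coordinates $y$ and $r$ of the cylindrical system $\psi_{F,S,A}^{-1}\circ\Phi$ around $S$ and modifies only the angular coordinate $\phi$. I will verify the six properties in turn.

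For $(i)$, the building blocks $\psi_{F,S,A}^{\pm 1}$, $\Phi$ (away from its axis $S$), $g_{\omega_S}$ and the cutoffs $c_{a,b}$ are all $\mathcal{C}^1$, so smoothness is automatic on the interior of $D_{F,S}(\delta)\setminus S$; the one thing to check is $\mathcal{C}^1$-matching across $\partial D_{F,S}(\delta)\setminus S$. This is precisely what the cutoffs are designed for: at $y\in\{\delta/4,\ell_S-\delta/4\}$ one has $c_1=c_1'=0$, while at $r=\delta t/4$ one has $c_2=1$ and $c_2'=0$; in either case the new angle $\phi'$ collapses to $\phi$ together with its first derivatives. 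For $(ii)$, with $y$ and $r$ frozen the map $\phi\mapsto\phi'$ is a convex combination of $g_{\omega_S}$ and $\id$, both increasing, so $b_{F,S,\delta}$ is injective. Property $(iii)$ is immediate, and for $(iv)$ one notes that points of $F$ have $\phi=0$ and $g_{\omega_S}(0)=0$.

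For $(v)$, on the specified region $y\in[\delta/2,\ell_S-\delta/2]$ and $r\leq\delta t/8$ force $c_1=1$, $c_1'=0$, $c_2=0$ and $c_2'=0$; the formula collapses to $\phi'=g_{\omega_S}(\phi)$. Applying the chain rule exactly as in Proposition \ref{HorrorMovieI}, in the orthonormal frame $\{\psi_{F,S,A}^{-1}(e_i)\}$ and at a point of $F$ (where $\phi=0$), the vanishing of $g_{\omega_S}'(0)$ from Proposition \ref{GString} yields $D_{\vec{n}}b_{F,S,\delta}=0$.

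The only real work is $(vi)$. The Jacobian of $b_{F,S,\delta}$ in cylindrical coordinates is the identity in the $(y,r)$-block, and its $\phi'$-row has entries $\partial_y\phi' = c_1'(y)(1-c_2)[g_{\omega_S}(\phi)-\phi]$, $\partial_r\phi' = -c_1 c_2'(r)[g_{\omega_S}(\phi)-\phi]$ and $\partial_\phi\phi' = c_1[(1-c_2)g_{\omega_S}'(\phi)+c_2]+(1-c_1)$. Naively $c_1'=O(1/\delta)$ and $c_2'=O(1/(\delta t))$ appear to blow up, but the Euclidean derivative in the angular direction carries an additional factor $r$ from the arc-length element $r\,d\phi$, and on $D_{F,S}(\delta)$ one has $r\leq\delta t/4$. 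This is the crucial cancellation: $r|\partial_y\phi'|\lesssim(\delta t)(1/\delta) = O(t) = O(1)$ and $r|\partial_r\phi'|\lesssim(\delta t)/(\delta t) = O(1)$, while $\partial_\phi\phi'$ is a convex combination of $g_{\omega_S}'$ and $1$, hence bounded by $4/3$ by Proposition \ref{GString}. Composition with the isometries $D\psi_{F,S,A}^{\pm 1}$ preserves norms, giving $|Db_{F,S,\delta}|\leq C$ independent of $\delta$. This arc-length cancellation is the one step requiring care.
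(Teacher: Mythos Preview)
Your proof is correct and follows exactly the approach the paper takes: the paper's own proof of this proposition is a one-line reference back to Proposition~\ref{HorrorMovieI}, noting that $b_{F,S,\delta}$ is structurally identical to $h_{F,S,A,\delta}$ except that it acts along the middle of the side rather than near a corner, and you have simply carried out that repetition explicitly. The key observation you isolate for $(vi)$---that the potentially large factors $c_1'=O(1/\delta)$ and $c_2'=O(1/(\delta t))$ are compensated by the arc-length weight $r\leq \tfrac{1}{4}\delta t$ coming from $|\partial_\phi\Phi|=r$---is precisely the mechanism used in the paper's proof of Proposition~\ref{HorrorMovieI}~$vi)$.
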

	
	\begin{proof}
		The map $b_{F,S, \delta}$ is the same as the map $h_{F,S,A, \delta}$ with the only difference that $b_{F,S, \delta}$ is defined along an entire side $S$ away from its corners but $h_{F,S,A,\delta}$ is only defined near the corners. The proof therefore is basically a repetition of the arguments from Proposition~\ref{HorrorMovieI}.
	\end{proof}

	\begin{lemma}\label{LemmingsAgain}
		For every 1-subsimplex of $\T$, $S$, with endpoints $A$ and $A'$. For every $0<\delta < d_S$ we define the set $W_{S}(\delta)$ as
		$$
		\begin{aligned}
		W_{S}(\delta) = \Big\{x\in& \er^3; x = \psi^{-1}_{S}(\Phi(y,r,\phi));\\
		& y\in[\tfrac{\delta}{4}, \ell_S - \tfrac{\delta}{4}], r\in[0,\tfrac{1}{4}\delta\tan(\tfrac{1}{3}\theta_S)], \phi\in[-\pi,\pi)\Big\}.
		\end{aligned}
		$$
		For each $0<\delta, \delta'<d_S$ we have that either $S = S'$ or $W_{S}(\delta) \cap W_{S'}(\delta') = \emptyset$. Also
		\begin{equation}\label{ConatinmentW}
		W_S(\delta) \subset S+B(0,\tfrac{1}{4}\delta).
		\end{equation}
	\end{lemma}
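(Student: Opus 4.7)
The plan is to adapt the argument of Lemma~\ref{Lemmings} to the new setting where the angular coordinate $\phi$ ranges over the full circle $[-\pi,\pi)$ rather than a narrow sector, so the sets $W_S(\delta)$ are genuine tubes around $S$ rather than angular sectors. Since $W_S(\delta)$ is rotationally symmetric around the axis through $S$, the choice of reference face $F$ in $\psi_S$ is immaterial, and the geometric picture reduces to comparing two fattened segments.

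First I would verify the containment \eqref{ConatinmentW}. Any $x \in W_S(\delta)$ has, in cylindrical coordinates about the axis through $S$, radial component $r \leq \tfrac{1}{4}\delta\tan(\tfrac{1}{3}\theta_S) < \tfrac{\delta}{4}$ (using $\theta_S \leq \tfrac{\pi}{4}$ so that $\tan(\tfrac{1}{3}\theta_S) < 1$) and axial component $y \in [\tfrac{\delta}{4}, \ell_S - \tfrac{\delta}{4}]$; the latter ensures that the orthogonal projection of $x$ onto the line through $S$ lands inside $S$, so $\dist(x,S) \leq r < \tfrac{\delta}{4}$, giving \eqref{ConatinmentW}.

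Next, to prove the disjointness when $S \neq S'$, I would split into two cases. If $S$ and $S'$ share no endpoint, then the definition of $d_S$ yields $\dist(S,S') \geq d_S$; since \eqref{ConatinmentW} combined with $\delta, \delta' < d_S$ gives $W_S(\delta) \subset S + B(0,\tfrac{d_S}{4})$ and $W_{S'}(\delta') \subset S' + B(0,\tfrac{d_S}{4})$, the two tubes are separated by a distance at least $\tfrac{d_S}{2}$. If $S$ and $S'$ share a single vertex $A$ (two common endpoints force $S = S'$), then every $x \in W_S(\delta)$ with $r>0$ lies on a ray from $A$ making angle $\arctan(r/y) \leq \arctan(\tan(\tfrac{1}{3}\theta_S)) = \tfrac{1}{3}\theta_S$ with the direction of $S$, and symmetrically for $W_{S'}(\delta')$. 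By the spherical triangle inequality, a single ray from $A$ can lie within angle $\tfrac{1}{3}\theta_S$ of $S$ and within angle $\tfrac{1}{3}\theta_{S'}$ of $S'$ only if the angle $\theta(S,S')$ between the two sides satisfies $\theta(S,S') \leq \tfrac{1}{3}\theta_S + \tfrac{1}{3}\theta_{S'}$; but $\theta_S, \theta_{S'} \leq \theta_A \leq \theta(S,S')$, so $\tfrac{1}{3}\theta_S + \tfrac{1}{3}\theta_{S'} \leq \tfrac{2}{3}\theta_A < \theta_A \leq \theta(S,S')$, a contradiction. Hence the only possible common point is $A$ itself, which is excluded by $y \geq \tfrac{\delta}{4} > 0$.

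The main (and essentially only) obstacle is keeping the angular book-keeping in Case~2 straight; once one notes that the definition of $\theta_S$ forces $\theta_S \leq \theta_A$ at each endpoint of $S$, the strict inequality $\tfrac{1}{3}\theta_S + \tfrac{1}{3}\theta_{S'} < \theta_A \leq \theta(S,S')$ becomes automatic and the argument closes. All other steps are direct consequences of the definitions of $d_S$, $\theta_S$, and $W_S(\delta)$.
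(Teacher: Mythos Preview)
Your proof is correct and follows essentially the same route as the paper: first establish \eqref{ConatinmentW} from $\tan(\tfrac{1}{3}\theta_S)<1$, then split the disjointness claim into the case $S\cap S'=\emptyset$ (handled via $d_S\le\dist(S,\T_S)$) and the case $S\cap S'=\{A\}$ (handled by a cone argument). The only difference is cosmetic: the paper cites Lemma~\ref{Lemmings} to place $W_S(\delta)$ inside the cone $K_{A,S,\theta_S/2}$, whereas you compute directly that $\arctan(r/y)\le\tfrac{1}{3}\theta_S$, giving the tighter cone $K_{A,S,\theta_S/3}$ and a self-contained argument.
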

	
	\begin{proof}
		Let us consider the case where $S\cap S' = \emptyset$. The inclusions
		$$
		W_{S}(\delta)
		\subset S+B(0,\tfrac{\delta\tan\tfrac{1}{3}\theta_S}{4})
		\subset S+B(0,\tfrac{\delta}{4})
		$$
		hold by $\tan\theta_S < 1$. This proves \eqref{ConatinmentW}. Further, for $S\cap S' = \emptyset$, by the fact that $\delta< d_S \leq \dist(S,\T_S) \leq \dist(S,S')$ we have that $S+B(0,\tfrac{d_S}{4}) \cap S'+B(0,\tfrac{d_{S'}}{4}) = \emptyset$ and so $W_{S}(\delta)\cap W_{S'}(\delta) = \emptyset$.
		
		Thus the case remains that $S\cap S' = A$ a common endpoint of $S$ and $S'$. As in Lemma~\ref{Lemmings} it holds that
		$$
		W_{S}(\delta) \subset K_{A, S, \theta_S/2} \text{ and } W_{S'}(\delta) \subset K_{A, S', \theta_S/2}
		$$
		and $K_{A, S, \theta_S/2}\cap K_{A, S', \theta_S/2} = \emptyset$ for $S\neq S'$.
	\end{proof}
	
	For each side $\{A,A'\}\subset S\subset F$ (let $F$ be any one of the faces containing $S$ chosen arbitrarily) we define the mapping $w_{S, \delta}$ as follows. We use the notation $t = \tan(\tfrac{1}{3}\theta_S)$ and $c(y) = c_{\delta/4, \delta/2}(y) - c_{\ell_S-\delta/2, \ell_S-\delta/4}(y)$. On $W_S(\delta)$ we define
	$$
	w_{S, \delta}(x) =\psi_{F,S,A}^{-1}\Big(\Phi\big(y, c(y)g_{\delta t/4}(r) + (1-c(y))r, \phi \big) \Big)
	$$
	and on $W_S(\delta)$ we put
	$$
	w_{S, \delta}(x)  = x.
	$$
	
	\begin{prop}\label{Aaaaaaaggggrrr}
		The mapping $w_{S, \delta}$ has the following properties
		\begin{enumerate}
			\item[$i)$] $w_{S, \delta} \in \mathcal{C}^1(\er^3 )$,
			\item[$ii)$] $w_{S, \delta}$ is injective,
			\item[$iii)$] $w_{S, \delta}(x) = x$ on $\er^3 \setminus W_{S}(\delta)$,
			\item[$iv)$] $w_{S, \delta}(x) = x$ on $S$, and $w_{S, \delta}(V) = V$ for every $V$ subsimplex of  $\T$
			\item[$v)$] $D_{\vec{n}}w_{S, \delta} = 0$ on $S \cap \{x : y\in[\tfrac{\delta}{2}, \ell_S - \tfrac{\delta}{2}]\}$, where $\vec{n}$ is any vector perpendicular to $S$,
			\item[$vi)$] $\|D w_{S, \delta}\|_{\infty}<C$ independent of $\delta$.
		\end{enumerate} 
	\end{prop}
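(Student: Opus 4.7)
The statement is the analogue, for the interior of a side $S$, of what Proposition~\ref{Sunset} does near a vertex and what Proposition~\ref{Aaaggrr} does inside a face. The plan is to verify the six properties in parallel with those earlier arguments; the only genuinely new issue is $\mathcal{C}^1$ smoothness across $S$ itself, where the cylindrical coordinates are singular.

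Points $iii)$ and $iv)$ are read off the definition: outside $W_S(\delta)$ the map is $\id$, on $S$ we have $r=0$ and $g_{\delta t/4}(0)=0$, and for any other subsimplex $V$ of $\T$, Lemma~\ref{LemmingsAgain} together with \eqref{ConatinmentW} implies that $W_S(\delta)$ can meet $V$ only if $S\subset V$; in the $\psi_{F,S,A}$-cylindrical coordinates such a $V$ is then a union of half-planes of constant $\phi$ extending in $r\ge 0$, and since $w_{S,\delta}$ leaves $y$ and $\phi$ unchanged one obtains $w_{S,\delta}(V)=V$. Injectivity $ii)$ is one-dimensional: with $y$ and $\phi$ fixed, the radial map $r\mapsto c(y)g_{\delta t/4}(r)+(1-c(y))r$ is a convex combination of two strictly increasing functions (Proposition~\ref{GString}), hence strictly increasing, and it agrees with $\id$ on $\partial W_S(\delta)$.

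For $i)$, away from $S$ the map is a composition of smooth ingredients ($\psi_{F,S,A}$, $\Phi$, $g$, $c$), and continuity of the partial derivatives across $\partial W_S(\delta)\setminus S$ follows from $c(\delta/4)=c(\ell_S-\delta/4)=0$ with vanishing derivative, together with $g_{\delta t/4}(\delta t/4)=\delta t/4$ and $g_{\delta t/4}'(\delta t/4)=1$ supplied by Propositions~\ref{GString} and \ref{CString}. The main obstacle is smoothness on $S$: I would pass back to the Cartesian frame $\psi_{F,S,A}$ and write, with $r=\sqrt{x_2^2+x_3^2}$ and $\mu(y,r)=c(y)g_{\delta t/4}(r)+(1-c(y))r$,
\[
w_{S,\delta}(y,x_2,x_3)=\Big(y,\ \tfrac{\mu(y,r)}{r}x_2,\ \tfrac{\mu(y,r)}{r}x_3\Big).
\]
The explicit polynomial form of $g_{\delta t/4}$ in Proposition~\ref{GString} gives $\mu(y,r)/r=(1-c(y))+\alpha(y)r+\beta(y)r^2$, so the last two coordinates split into a smooth part $(1-c(y))x_j+\beta(y)r^2x_j$ and a remainder of the form $\alpha(y)\,rx_j=\alpha(y)\sqrt{x_2^2+x_3^2}\,x_j$ which is $C^1$ at the origin with vanishing gradient. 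This gives $w_{S,\delta}\in\mathcal{C}^1(\er^3)$.

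Point $v)$ then drops out: on $S\cap\{y\in[\delta/2,\ell_S-\delta/2]\}$ we have $c(y)=1$, so the Cartesian form reduces to $(y,g_{\delta t/4}(r)x_j/r)$, and because $g_{\delta t/4}'(0)=0$ the ratio $g_{\delta t/4}(r)/r$ is of order $r$, so every directional derivative perpendicular to $S$ vanishes. Finally $vi)$ is a routine cylindrical-coordinate estimate as in Proposition~\ref{HorrorMovieI}: Proposition~\ref{GString} gives $|\partial_r w|\le\tfrac{4}{3}$, and Proposition~\ref{CString} gives $|c'(y)|\le C/\delta$, whence $|\partial_y w|\le 1+|c'(y)||g_{\delta t/4}(r)-r|\le 1+(C/\delta)(\delta/4)\le C$ using $r\le\delta t/4\le\delta/4$ on $W_S(\delta)$; transferring back through the unitary $\psi_{F,S,A}$ preserves the bound, which is independent of $\delta$.
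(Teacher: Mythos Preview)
Your proof is correct and follows the paper's approach, which simply points to the analogy with $H_{S,A,\delta}$ and refers back to the calculations of Propositions~\ref{HorrorMovieI} and~\ref{Sunset}. Your explicit Cartesian decomposition exploiting the polynomial form of $g_{\delta t/4}$ to establish $\mathcal{C}^1$ regularity across $S$ is in fact more detailed than what the paper provides there; one small caveat is that Lemma~\ref{LemmingsAgain} by itself does not literally say $W_S(\delta)$ meets a subsimplex $V$ only when $S\subset V$ --- that needs a short extra geometric observation using $\theta_S$ and the restriction $y\in[\delta/4,\ell_S-\delta/4]$ --- but the paper does not spell this out either.
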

	
	\begin{proof}
		The map $w_{S, \delta}$ is the same as the map $H_{S,A, \delta}$ with the only difference that $H_{S, A, \delta}$ is defined along an entire side $S$ away from its corners but $H_{S,A,\delta}$ is only defined near the corners. The proof therefore is basically a repetition of the arguments from Proposition~\ref{HorrorMovieI}.
	\end{proof}

	For every $F$ 2-subsimplex of $\T$ we choose a 1-subsimplex of $F$ called $S$ arbitrarily and choose one of its endpoints called $A$ also arbitrarily. We use the shorthand
	$$
	\begin{aligned}
	p_1(x) &= p_{F,S,A}^1(x) = [\psi_{F,S,A}(x)]^1,\\
	p_2(x) &= p_{F,S,A}^2(x) = [\psi_{F,S,A}(x)]^2, \\
	p_3(x) &= p_{F,S,A}^3(x) = [\psi_{F,S,A}(x)]^3.
	\end{aligned}
	$$
	
	\begin{lemma}\label{NoMoreLemmings}
		Let $0<2\delta<d_F$ and call $t = \tan\tfrac{1}{3}\omega_F$. For every 2-subsimplex of $\T$, $F$, with sides $S_1,S_2,S_3$ we define the set
		$$
		K_F(\delta ) = \{x\in \er^3 : \lambda_{F,\delta}(x)>0; |p_3|  < \tfrac{1}{2} \delta t \lambda_{F,\delta}(x) \}
		$$
		Then either $F = F'$ or for any $\delta' < d_{F'}$ it holds that $K_F(\delta) \cap K_{F'}(\delta') = \emptyset$.
		Also
		\begin{equation}\label{ConatinmentK}
		K_F(\delta) \subset F+B(0,\delta).
		\end{equation}
	\end{lemma}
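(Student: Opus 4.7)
The plan is to mimic the template used in Lemmas~\ref{Album0}, \ref{Lemmings} and \ref{LemmingsAgain}: first establish the containment \eqref{ConatinmentK}, then prove disjointness by splitting on how $F$ and $F'$ can meet. The function $\lambda_{F,\delta}$ is a planar cutoff attached to $F$ that is strictly positive only on (an interior portion of) a $\delta$-neighbourhood of $F$ inside the plane of $F$, and that is bounded above by $1$.

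For \eqref{ConatinmentK}, in the frame $\psi_{F,S,A}$ the face $F$ lies in the plane $\{p_3 = 0\}$. The hypothesis $\lambda_{F,\delta}(x)>0$ forces the in-plane coordinates $(p_1, p_2)$ to lie within distance $\delta$ of $F$ inside this plane. The perpendicular coordinate is bounded by $|p_3| < \tfrac{1}{2}\delta t\,\lambda_{F,\delta}(x) \leq \tfrac{1}{2}\delta$, where $t = \tan(\omega_F/3) < 1$ uses $\omega_F \leq \pi/4$, and $\lambda_{F,\delta} \leq 1$. Combining the two estimates via the triangle inequality yields $x \in F + B(0,\delta)$.

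For disjointness, fix $F \neq F'$ and split on $F \cap F'$. If $F \cap F' = \emptyset$, then $F' \in \T_F$, hence $\dist(F,F') \geq d_F$; combined with the containment just proved and the hypotheses $2\delta < d_F$, $2\delta' < d_{F'}$ this immediately excludes any overlap. If $F \cap F' = \{A\}$, a single vertex, the cone argument from Lemma~\ref{LemmingsAgain} adapts directly: each $K$-set sits in a thin tube around its own plane, and near $A$ this tube lies inside the cone $K_{A,S,\theta_S/2}$ associated to one of the edges at $A$; by the definition of $\theta_A$ the cones corresponding to $F$ and $F'$ are disjoint off $A$, and since $\lambda_{F,\delta}$ vanishes at $A \in \partial F$ the sets $K_F$, $K_{F'}$ themselves stay away from the apex.

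The main obstacle is the remaining case $F \cap F' = S$, a shared side. I would choose coordinates so that $S$ is the $x_1$-axis, $F$ lies in the half-plane $\{x_3 = 0,\ x_2 \geq 0\}$, and $F'$ is obtained by rotating this half-plane about $S$ through the dihedral angle $\omega_S \geq \omega_F$. A direct computation gives the two perpendicular coordinates $p_3 = x_3$ for $F$ and $p'_3 = x_2 \sin \omega_S - x_3 \cos \omega_S$ for $F'$. A point $x \in K_F(\delta) \cap K_{F'}(\delta')$ must satisfy both $|p_3|, |p'_3| < \tfrac{1}{2}\max(\delta, \delta')\,t$ with $t = \tan(\omega_F/3) \leq \tan(\omega_S/3)$. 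Solving this linear system and using $\sin \omega_S > 0$ bounds $|x_2|$ by a constant multiple of $\delta \tan(\omega_S/3)/\sin\omega_S$, which is strictly smaller than $\delta$. Consequently the planar projection of $x$ onto $F$ lies strictly within a $\delta$-neighbourhood of $S \subset \partial F$, which is exactly the region in which $\lambda_{F,\delta}$ is designed to vanish; this forces $\lambda_{F,\delta}(x) = 0$, contradicting $x \in K_F(\delta)$. The quantitative point is that the aperture $\tan(\omega_F/3)$ is small enough, relative to the dihedral angle $\omega_S \geq \omega_F$, that the two perpendicular slabs of half-thickness $\tfrac{1}{2}\delta t$ can meet only in the already-excluded strip adjacent to $S$.
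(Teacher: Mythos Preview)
Your containment argument and the $F\cap F'=\emptyset$ case are fine and match the paper. The two interesting cases, however, both have genuine gaps, and in each of them the paper proceeds differently.

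\textbf{Shared side $F\cap F'=S$.} Your slab computation is correct up to the bound $|x_2|\lesssim \max(\delta,\delta')\tan(\omega/3)/\sin\omega$, but the conclusion you draw from it is wrong: $\lambda_{F,\delta}$ does \emph{not} vanish on a $\delta$-neighbourhood of $\partial F$. By its definition it vanishes only where $\dist((p_1,p_2,0),\partial F)<\tfrac{t\delta}{32}$, so you would need $|x_2|<\tfrac{t\delta}{32}$, which your estimate does not give (and cannot give without also controlling $\delta'$). The paper avoids this mismatch by arguing angularly rather than through $\lambda$: for $x\in K_F(\delta)$ one shows that the angle at the foot $(p_1,0,0)\in S$ between $(p_1,p_2,p_3)$ and the plane of $F$ is at most $\tfrac{1}{3}\omega_F$; since the dihedral angle between $F$ and $F'$ at $S$ is at least $\omega_S\ge\omega_F$, the two angular wedges about $S$ are disjoint. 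This angular estimate has a second payoff you miss: it forces $K_F(\delta)\subset (T_1\cup T_2)^{\circ}$, the open union of the two simplices of $\T$ sharing the face $F$.

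\textbf{Shared vertex $F\cap F'=\{A\}$.} The cone argument from Lemma~\ref{LemmingsAgain} does \emph{not} adapt. That lemma traps a tube around a \emph{side} $S$ inside a cone of half-angle $\theta_S/2$ about $S$; but $K_F(\delta)$ projects onto the interior of the \emph{face} $F$, whose wedge at $A$ has opening $\theta_{F,A}\ge\theta_A\ge\theta_S$, so it cannot sit inside a cone of half-angle $\theta_S/2$ around any single edge at $A$. The paper's argument here is entirely different and relies on the previous case: from $K_F(\delta)\subset (T_1\cup T_2)^{\circ}$ and the observation that $F\cap F'=\{A\}$ forces $T_1,T_2,T_1',T_2'$ to be four distinct simplices of $\T$ (two faces of a common tetrahedron always share an edge), one gets $(T_1\cup T_2)^{\circ}\cap(T_1'\cup T_2')^{\circ}=\emptyset$ from the simplicial-complex structure, and hence $K_F(\delta)\cap K_{F'}(\delta')=\emptyset$.
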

	\begin{proof}
		The proof is similar to the lemmata above. Take $F\neq F'$, either $F\cap F' = S$, $F\cap F' = \{A\}$ or $F\cap F'  =\emptyset$.
		
		In the $F\cap F'  =\emptyset$ case, by the choice of $\omega_F \leq \tfrac{\pi}{4}$ and because $\delta < \tfrac{1}{2}d_F$ we have $K_F(\delta) \subset F+B(0,\delta) \subset F+B(0,\tfrac{1}{2}d_F)$. But by the definition of $d_F$ it holds that $F+B(0,\tfrac{1}{2}d_F) \cap F'+B(0,\tfrac{1}{2}d_{F'}) = \emptyset$.
		
		In the case where $F$ and $F'$ share exaclty one side $S$ then the choice of $\omega_{F}$ garantees that any point $(p_1,p_2,p_3) \in K_F(\delta)$ satisfies
		$$
		\angle (p_1,p_2,p_3), (p_1,0,0), (p_1,p_2,0) \leq \tfrac{1}{3}\omega_F
		$$
		for any $(p_1,0,0) \in S \subset F$. Since the same holds at other faces then, by the definition of $\omega_F$ we see that $K_F(\delta) \cap K_{F'}(\delta') = \emptyset$.
		
		By the previous estimate we know that $K_F(\delta) \subset T_1, T_2$ where $T_1,T_2 \in \T$ are the two uniquely determined simplices both containing $F$. Assume that $F,F'$ are a pair of 2-subsimplices of $\T$ such that $F\cap F' = \{A\}$. Then it must hold that $T_1,T_2, T_1', T_2'$ the simplices containing $F$ and $F'$ respectively are all distinct simplices. Therefore obviously $(T_1 \cup T_2)^{\circ} \cap (T_1'\cup T_2')^{\circ} = \emptyset$. But since $K_{F}(\delta) \subset (T_1 \cup T_2)^{\circ}$ then necessarilly $K_F(\delta) \cap K_{F'}(\delta') \emptyset$.
	\end{proof}

	We denote $t = \tan \tfrac{1}{3}\omega_F$. Notice that this $t$ is less than or equal to $t$ from Proposition~\ref{Aaaggrr} by definition. For every $\delta > 0$  and for every $F$ there exists a smooth function $\lambda_{F,\delta}$ such that
	$$
	\lambda_{F, \delta} (x) = 
	\begin{cases}
	0 \quad &\dist\big( (p_1(x), p_2(x), 0), \er^2\times\{0\} \setminus \psi_{F,S,A} (F)\big) <\frac{t\delta}{32}\\
	1 \quad &\dist\big( (p_1(x), p_2(x), 0), \er^2\times\{0\} \setminus \psi_{F,S,A} (F)\big) >\frac{t\delta}{16},
	\end{cases}
	$$
	$\lambda_{F,\delta}(x)$ is independent on the value of $p_3$ and $\lambda_{F,\delta}$ increases as $\dist\big( (p_1(x), p_2(x), 0), \er^2\times\{0\} \setminus \psi_{F,S,A} (F)\big)$ increases.

	For each $0< \delta< d_{F}$ we define the map
	$$
	s_{F,\delta}(x) = \psi_{F,S,A}^{-1}\big(p_1(x),p_2(x),\lambda_{F,\delta}(x) g_{\tfrac{1}{64} \delta t^2}(p_3(x)) + (1-\lambda_{F,\delta}(x))p_3(x)\big).
	$$
	
	\begin{prop}\label{BahHumbug}
		The mapping $s_{F, \delta}$ has the following properties
		\begin{enumerate}
			\item[$i)$] $s_{F, \delta} \in \mathcal{C}^1(\er^3, \er^3)$,
			\item[$ii)$] $s_{F, \delta}$ is injective,
			\item[$iii)$] $s_{F, \delta}(x) = x$ on $\{|p_3(x)| > \tfrac{1}{64} \delta t^2 \} \cup \{ \lambda  = 0 \}$,
			\item[$iv)$] $s_{F, \delta}(x) = x$ on every $V$ subsimplex of  $\T$,
			\item[$v)$] $D_{\vec{n}}s_{F, \delta} = 0$ on $F\cap \{\lambda_{F,\delta} = 1 \}$, where $\vec{n}$ the normal vector of $F$,
			\item[$vi)$] $|Ds_{F, \delta}| < C$ independent of $\delta$.
		\end{enumerate} 
	\end{prop}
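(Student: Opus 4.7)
The map $s_{F,\delta}$ only alters the third coordinate of $\psi_{F,S,A}(x)$, via the convex combination
$$
p_3\mapsto \lambda_{F,\delta}(x)\,g_{\delta t^2/64}(p_3)+(1-\lambda_{F,\delta}(x))\,p_3,
$$
where the weight $\lambda_{F,\delta}$ depends only on $(p_1,p_2)$. This structural simplicity makes five of the six claims quick; point $(iv)$ is the only real geometric content.

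For $(i)$, every ingredient of the composition is smooth: $\psi_{F,S,A}$ is an affine isometry, $\lambda_{F,\delta}$ is smooth by its construction, and $g$ is $\mathcal{C}^1$ by Proposition~\ref{GString}. For $(ii)$, injectivity reduces to strict monotonicity of $p_3\mapsto\lambda g(p_3)+(1-\lambda)p_3$ on each vertical fibre, where $\lambda\in[0,1]$ is constant and both $g$ and the identity are strictly increasing. Claim $(iii)$ is immediate: $g_{\delta t^2/64}(p_3)=p_3$ whenever $|p_3|>\delta t^2/64$, and if $\lambda=0$ the combination is just $p_3$. For $(v)$, on $F$ we have $p_3=0$; differentiating the third component in the normal direction gives $\lambda g'(0)+(1-\lambda)=1-\lambda$ using $g'(0)=0$ from Proposition~\ref{GString}, which vanishes on $\{\lambda=1\}$. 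For $(vi)$, the $p_3$-derivative is bounded by $4/3$ directly, while the $p_1$- and $p_2$-derivatives acquire the extra term $(\partial_i\lambda)(g(p_3)-p_3)$; since $\lambda_{F,\delta}$ transitions on scale $\delta t$ giving $|\partial_i\lambda|\leq C/(\delta t)$ and $|g(p_3)-p_3|\leq C\delta t^2$ on its support, the product is $O(t)=O(1)$.

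The main work, and the step I expect to be the real obstacle, is $(iv)$. For $V\subset F$ the identity $p_3\equiv 0$ together with $g(0)=0$ gives $s_{F,\delta}(x)=x$ at once. For the remaining subsimplices, $\{s_{F,\delta}\neq\id\}\subset K_F(\delta)\subset F+B(0,\delta)$ by Lemma~\ref{NoMoreLemmings}, so only $V$ meeting this neighbourhood are in question. A nontrivial action at a point $x$ requires both $\lambda_{F,\delta}(x)>0$ (the projection of $x$ onto the plane of $F$ lies at least $\delta t/32$ inside $\psi_{F,S,A}(F)$) and $|p_3(x)|\leq\delta t^2/64$. For $V$ a neighbouring 2-subsimplex sharing side $S$ with $F$ at dihedral angle $\omega$, a point of $V$ at distance $\rho$ from $S$ has $|p_3|=\rho\sin\omega$ and in-plane distance $\rho|\cos\omega|$ from $S$. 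If $\omega\geq\pi/2$ the projection of $V\setminus S$ leaves $\psi_{F,S,A}(F)$ immediately, forcing $\lambda=0$. If $\omega<\pi/2$, then $\omega\geq\omega_F$ by the definition of $\omega_F$ and hence $\tan\omega\geq\tan\omega_F\geq t$; the condition $\lambda>0$ forces $\rho|\cos\omega|\geq\delta t/32$, giving $|p_3|\geq(\delta t/32)\tan\omega\geq\delta t^2/32>\delta t^2/64$, which puts $p_3$ back in the range where $g$ is the identity. Neighbouring 1- and 0-subsimplices are handled by the same projection argument, and all other subsimplices are disjoint from $F+B(0,\delta)$ by the choice $\delta<d_F/2$. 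The care required to ensure this bookkeeping covers every configuration of adjacent subsimplices is the only substantive difficulty; the cone and angle constructions built into $\omega_F$, $d_F$, and $t$ supply exactly the estimates needed.
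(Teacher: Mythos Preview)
Your proof is correct and follows the same approach as the paper. The only organisational difference is in point~$(iv)$: the paper disposes of it in one sentence (``follows from point~$(iii)$ and Lemma~\ref{NoMoreLemmings}''), whereas you unpack the dihedral-angle computation explicitly --- but that computation is precisely the content of the proof of Lemma~\ref{NoMoreLemmings}, so the two arguments are the same at the level of ideas. Your estimate $|\partial_i\lambda|\le C/(\delta t)$ in~$(vi)$ is in fact sharper than the paper's stated $\delta^{-1}t^{-2}$, but both yield the same uniform bound.
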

	\begin{proof}
		We calculate
		$$
		D s_{F, \delta} = D\psi_{F,S,A}^{-1} \cdot D\big(p_1(x),p_2(x),\lambda_{F,\delta}(x) g_{\tfrac{1}{64} \delta t^2}(p_3(x)) + (1-\lambda_{F,\delta}(x))p_3(x)\big).
		$$
		The term $D\psi_{F,S,A}^{-1}$ is a constant $D\lambda_{F,\delta}$ exists and is continuous and the same holds for $g$. The functions $p_1, p_2, p_3$ are simply coordinate functions of identity in a different basis. Since all of the above derivatives exist and are continuous so is $Ds_{F,\delta}$, which is point $i)$.
		
		The injectivity of the map $s_{F,\delta}$ is a 1-dimensional question. The map $s_{F,\delta}$ is constant with respect to the projection in $p_1,p_2$ directions. For each $(p_1,p_2)$ the value of $\lambda_{F,\delta}$ is constant with respect to $p_3$. Therefore each segment $(p_1,p_2) \times[-\tfrac{1}{64} \delta t^2,\tfrac{1}{64} \delta t^2]$ is mapped onto itself. Moreover for every value of $\lambda_{F,\delta} \in [0,1]$ we have that both $g(p_3)$ and $p_3$ are increasing in $p_3$. Therefore also $\lambda_{F,\delta}g_{\tfrac{1}{64} \delta t^2}(p_3) + (1-\lambda_{F,\delta})p_3$ is increasing in $p_3$. Thus $s_{F,\delta}$ is increasing on each segment $(p_1,p_2) \times[-\tfrac{1}{64} \delta t^2,\tfrac{1}{64} \delta t^2]$. As each segment is mapped onto itself we see that the map $s_{F,\delta}$ must be injective, which is point $ii)$.
		
		Since $g_{\tfrac{1}{64} \delta t^2}(p_3) = p_3$ for all $|p_3|>\tfrac{1}{64} \delta t^2$ we immediately get point $iii)$ from the definition of $s_{F,\delta}$.  Point $iv)$ holds for $F$ because $g_{\tfrac{1}{64} \delta t^2}(0) = 0$. On other $V$ subsimplices of $\T$, point $iv)$ follows from point $iii)$ and Lemma~\ref{NoMoreLemmings}. Point $v)$ follows from $g'_{\tfrac{1}{64} \delta t^2}(0) = 0$. Point $vi)$ follows from two facts. The first of these is Proposition~\ref{GString}, that $|g'_{\delta}| \leq C$. The second fact is $|D\lambda_{F,\delta}| \approx \delta^{-1}t^{-2}$ and $|g_{\delta t^2}(\cdot) - \id(\cdot)| \leq \delta t^2$, which implies that $D|\lambda_{F,\delta}(\cdot)[g_{\tfrac{1}{64}\delta t^2}(\cdot) - \id(\cdot)]| \leq C$.
	\end{proof}
	
	\begin{proof}[Proof of Theorem~\ref{SocialConstruct}]
		For each $T_i$ we have a given $\delta_i$. Without loss of generality we assume that $\delta_i < \min\{d_F; F\cap T_i\neq \emptyset \}$. Let $A$ be a vertex of $\T$ we define $\delta_A = \min\{\delta_i; A\in T_i\}$ further we define $\delta_S = \min\{\delta_A; A\in S\}$ and $\delta_F = \min\{\delta_S; S \subset F\}$.
		
		For each pair $A$ and $F$, $A\in F$ with the two sides $S,S'$ such that $A\in S,S' \subset F$ we define the map $h_{F,A} = h_{F,S,A,\delta_A} \circ h_{F,S',A,\delta_A}$.The order of the maps in the composition is immaterial for the validity of the theorem. We define the map $h$ as the composition of all $h_{F,A}$ for all pairs $F$ and $A$. By \eqref{Contained} \eqref{Disjoint} and by Proposition~\ref{HorrorMovieI} $iii)$ and $iv)$ we have that for all $(F,A) \neq (F',A')$ we have $\{h_{F,A} \neq \id\}\cap \{h_{F',A'} \neq \id\} = \emptyset$ and hence the order of the composition of the maps $h_{F,A}$ in the construction of $h$ is immaterial. Similarly we define $H$ as the composition of $H_{S,A, \delta_A}$ for all pairs $(S,A)$, $A$ endpoint of $S$. Again by Lemma~\ref{AlbumI} and Proposition~\ref{Sunset} $iii)$ for all $(S,A) \neq (S',A')$ we have $\{H_{S,A, \delta_A} \neq \id\}\cap \{H_{S',A', \delta_{A'}} \neq \id\} = \emptyset$ and hence the order of the composition of the maps $H_{S,A, \delta_A}$ in the construction of $H$ is immaterial. Also we define $G$ as the composition of $G_{A, \delta_A}$ for all $A$. Again by $\delta_A < \tfrac{d_A}{3}$, for all $A \neq A'$ we have $\{G_{A, \delta_A} \neq \id\}\cap \{G_{A', \delta_{A'}} \neq \id\} = \emptyset$ and hence the order of the composition of the maps $G_{A, \delta_A}$ in the construction of $G$ is immaterial.
		
		We define the map $b$ as the composition of all $b_{F,S, \delta_S}$ for all pairs $F$ and $S$. By Lemma~\ref{Lemmings} and Proposition~\ref{Aaaggrr} $iii)$ and $iv)$, for all $(F,S) \neq (F',S')$ we have $\{b_{F,S, \delta_S} \neq \id\}\cap \{b_{F',S', \delta_{S'}} \neq \id\} = \emptyset$ and hence the order of the composition of the maps $b_{F,S, \delta_{S}}$ in the construction of $b$ is immaterial. Similarly we define the map $w$ as the composition of all $w_{S, \delta_S}$ for all $S$. By Lemma~\ref{LemmingsAgain} and Proposition~\ref{Aaaaaaaggggrrr} $iii)$ we have that for all $S \neq S'$ we have $\{w_{S, \delta_S} \neq \id\}\cap \{w_{S', \delta_{S'}} \neq \id\} = \emptyset$ and hence the order of the composition of the maps $w_{S, \delta_{S}}$ in the construction of $w$ is immaterial. Finally we define $s$ as the composition of all $s_{F, \delta_F \tan(\tfrac{1}{3}\omega_F)}$ for all $F$. By Lemma~\ref{NoMoreLemmings} and Proposition~\ref{BahHumbug} $iii)$ we have that for all $F \neq F'$ we have $\{s_{F, \delta_F\tan(\tfrac{1}{3}\omega_F)} \neq \id\}\cap \{s_{F', \delta_{F'}\tan(\tfrac{1}{3}\omega_{F'})} \neq \id\} = \emptyset$ the order of the composition of the maps $s_{F, \delta_{F}\tan(\tfrac{1}{3}\omega_F)}$ in the construction of $s$ is immaterial.
		
		Let $x \in F$ be a point in any face, a 2-subsimplex, but $x$ is not contained in any side $S$ a 1-subsimplex. Then the normal space to $F$ at $x$, $N_F$ is the 1 dimensional space generated by $\vec{n}$. If $x \in F\cap \{\lambda_{F,\delta} = 1 \}$ then by Proposition~\ref{BahHumbug} point $v)$ we have $D_{\vec{n}}s_{F, \delta} = 0$. The set $F\cap \{\lambda_{F,\delta} = 1 \}$ is all of $F$, with distance greater than $\tfrac{1}{16}\delta \tan\tfrac{\omega_F}{3}$ from a 1-subsimplex $S$ of $F$. Supposing $\dist(x, S)< \tfrac{1}{16}\delta \tan\tfrac{\omega_F}{3} < \tfrac{1}{8}\delta\tan\tfrac{\theta_S}{3}$ and $[\Psi_{F,S,A}(x)]^2 \in [\delta/2, \ell_S-\delta/2]$ then by Proposition~\ref{Aaaggrr} point $v)$ we have $D_{\vec{n}}b_{F,S, \delta_S}(x) = 0$. Finally in the case that $x\in B(A,\delta/2)\cap F$ then by Proposition~\ref{HorrorMovieI} point $v)$ we have that $D_{\vec{n}}h_{F,A}(x) = 0$.
		
		Let $x\in S$ then either $x \in S \cap \{ y\in[\tfrac{\delta_S}{2}, \ell_S - \tfrac{\delta_S}{2}]\}$ and by Proposition~\ref{Aaaaaaaggggrrr} point $v)$ it holds that $D_{\vec{n}}w_{S, \delta_S} = 0$ for any $\vec{n}$ perpendicular to $S$. In the other case there is an $A$, a 0-subsimplex of $S$, such that $|x-A| < \delta_S/2$. But then by Proposition~\ref{Sunset} point $v)$ we have $D_{\vec{n}}H_{S,A, \delta_A} = 0$ on $S \cap \{0<y<\tfrac{\delta}{2}\}$, for any $\vec{n}$ perpendicular to $S$. In the case that $x$ is a 0-subsimplex, then by Proposition~\ref{Lollipop} point $v)$, we have $DG_{A, \delta_A}(A) = 0$.
		
		
		We define
		$$
		\Xi_{\delta_i} =s\circ w \circ b\circ G \circ H\circ h .
		$$
		The map $\Xi_{\delta_i}$ is $\mathcal{C}^1(\er^3, \er^3)$ because each map $h,H,G,b,w,s$ is smooth except for some parts of the boundaries of simplices of $\T$. Whenever the partial derivatives of one of these maps doesn't exist then it is in a normal direction to a subsimplex of $\T$. By the previous two paragraphs however we have that at those points that map is composed with another map that has normal derivitive at those points equal to zero. Therefore we have that all normal derivitives at all points on $\partial T$ for any $T\in \T$ is zero. Further each of the maps $h,H,G,b,w,s$ is injective and has the property that it sends $V$ onto $V$ for all $V$ subsimplices of $\T$. Therefore $\Xi_{\delta_i}(V)  = V$ for all $V$ subsimplices of $\T$ (including the case that $V = T_i$). Thus we have $i), ii)$, $iii)$ and $iv)$.
		
		The property $v)$ comes from Proposition~\ref{HorrorMovieI} $vi)$, Proposition~\ref{Sunset} $vi)$, Proposition~\ref{Lollipop} $vi)$, Proposition~\ref{Aaaggrr} $vi)$, Proposition~\ref{Aaaaaaaggggrrr} $vi)$ and Proposition~\ref{BahHumbug} $vi)$.
		
		Point $vi)$ is a result of \eqref{ConatinmentE} with Proposition~\ref{HorrorMovieI} $iii)$, \eqref{ConatinmentL} with Proposition~\ref{Sunset} $iii)$, Proposition~\ref{Lollipop} $iii)$, \eqref{ConatinmentD} with Proposition~\ref{Aaaggrr} $iii)$, \eqref{ConatinmentW} with Proposition~\ref{Aaaaaaaggggrrr} $iii)$ and \eqref{ConatinmentK} with Proposition~\ref{BahHumbug} $iii)$.
	\end{proof}
	
	\begin{proof}[Proof of Theorem~\ref{main} for $n=3$]
		For $f$ we have $\T = \{T_i; i\in \en\}$ a locally finite simplicial complex such that $f$ is affine on each $T_i$. Let $\epsilon>0$ be a fixed number. Let $p_i>0$ be chosen such that
		$$
		\phi_X(T_i \cap [\partial T_i + B(0,p_i)]) < \tfrac{2^{-i}\epsilon}{2(1+\|D\Xi\|_{\infty})|Df_{|T_i}|}.
		$$
		Then we put
		$$
		\delta_i = \min\{d_F, F\subset T_i\}\cup\{p_i\}.
		$$
		
		We define $\tilde{f} = f\circ\Xi_{\delta_i}$. Then by Theorem~\ref{SocialConstruct} $iv)$, $ii)$ and $i)$ we have that $\tilde{f}\in \mathcal{C}^1(\Omega, \er^3)$ is a homeomorphism. On the other hand we calculate using Theorem~\ref{SocialConstruct} $v)$ and $vi)$ and the definition of $\delta_i$ that
		$$
		\begin{aligned}
		\|(Df - D\tilde{f})\chi_{T_i}\| 
		&\leq \|Df \chi_{T_i \cap \{ f\neq\tilde{f}\}}\| + \|D\tilde{f}\chi_{T_i \cap \{ f\neq\tilde{f}\}}\|\\
		& \leq \|Df_{|T_i}\|_{\infty}\phi_{X}(T_i \cap [\partial T_i + B(0,\delta_i)])(1 + [1+\|D\Xi_{\delta_i}\|_{\infty}])\\
		&\leq 2^{-i} \epsilon.
		\end{aligned}
		$$
		Then $\|Df - D\tilde{f}\|_X \leq \sum_i \|(Df - D\tilde{f})\chi_{T_i}\|_X \leq \epsilon$.
	\end{proof}
	
	\begin{remark}
		If we repeat the entire above construction with $g^{\alpha}_{\delta}$ instead of $g_{\delta}$, where
		$$
		g_{\delta}^{\alpha}(t) = \frac{1+\alpha}{\delta^{\alpha}} t^{1+\alpha}  - \frac{\alpha}{\delta^{1+\alpha}} t^{2+\alpha}
		$$
		then we can also achieve $\|\tilde{f}^{-1} - f^{-1}\|_{W^{1,p}} < \epsilon$. This is because Proposition~\ref{GString} holds also for $g_{\delta}^{\alpha}$ and for given $p$ it is not hard to  find an $\alpha > 0$ such that $\tilde{f}^{-1} \in W^{1,p}$ and as $\delta \to 0$ the norms of the difference of the two maps tends to 0.
	\end{remark}
	
	\section{Remark on the proof for higher dimension}
	The proof of Theorem~\ref{main} for $n\geq 4$ is in principle the same as for  dimension 2 and 3. The aim is to construct a mapping corresponding to $\Xi_{\delta}$. For this we do the following. Given a $k$-subsimplex $S = S_k$, $0\leq k \leq n-1$ we assume an orthonormal basis $v_1,\dots v_n$ where $v_1, \dots, v_k$ generate the $k$-dimensional surface of $S_k$ and $v_{k+1}, \dots, v_{n}$ are perpendicular to it. We call $x = (\tilde{x}, \hat{x}) \in \rn$, where $\tilde{x} \in \spn\{v_1,\dots, v_k\}$ and $\hat{x} \in \spn\{v_{k+1},\dots, v_n\}$ and $\tilde{p}(x) = \tilde{x}$ and $\hat{p} (x) = \hat{x}$ the corresponding orthogonal projections. For all $\delta >0$ we assume a smooth map $d_{S,\delta}$ such that
	\begin{equation}\label{Similar}
	d_{S,\delta}(\tilde{x}) =\begin{cases}
	0 \qquad &\tilde{x} \in \tilde{p}(\rn) \setminus \tilde{p}(S_k)\\
	0 \qquad &\tilde{x} \in \tilde{p}(S_k) \text{ and } \dist(\tilde{x} , \partial_k\tilde{p}(S_k) \leq \delta)\\
	1 \qquad &\tilde{x} \in \tilde{p}(S_k) \text{ and } \dist(\tilde{x} , \partial_k\tilde{p}(S_k) \geq 2\delta)\\
	\end{cases}	
	\end{equation}
	(where $\partial_k$ is the topological boundary in $\er^{k}$) with $|D d_{\delta, S}| \leq C \delta^{-1}$. As before we choose a small number $\alpha >0$ depending on the geometry of the simplicial complex near $S$. Then we define a map
	$$
	b_{S, \delta}(\tilde{x}, \hat{x}) = d_{S,\alpha^{k-1}\delta}(\tilde{x}) g_{\alpha^k\delta}(|\hat{x}|)\frac{\hat{x}}{|\hat{x}|} + (1-d_{S,\delta}(\tilde{x}))\hat{x} + \tilde{x}.
	$$
	By choosing $\alpha = \alpha_S$ small we get that any such pair of distinct maps $b_{S, \delta}$ and $b_{T, \delta}$ \emph{act independently} i.e. 
	$$
	\{b_{S, \delta} \neq \id\} \cap \{b_{T, \delta}\neq \id\} = \emptyset.
	$$
	The map $b_{S,\delta}$ pinches the central part of $S_k$ as a $n-k$ dimensional radial map perpendicularly to $S_k$ and behaves like identity in the other directions
	
	Let $S'$ be a $k-1$-subsimplex of $S$ and assume that (true up to rotate the basis and a translation) $S' = S \cap \{x\in \rn, x \cdot v_k = 0 \}$ and $x\cdot v_k \geq 0$ for $x\in S$. Then $\tilde{x} = (\tilde{\tilde{x}}, \tilde{x}_k)$ and if $\tilde{x}\in S'$ then $\tilde{x} = (\tilde{\tilde{x}} ,0)$. Let $d_{S', \delta}$ be the smooth map equaling 1 inside $S'$ and $2\delta$ from its ($k-1$ dimensional) boundary and equaling 0 at points $\delta$ close to its ($k-1$ dimensional) boundary and outside $S'$. This is similar to $d_{S,\delta}$ from \eqref{Similar}.  We define
	$$
	\begin{aligned}
	b_{S,S',\delta}(x) =
	& d_{S',\alpha^{k-1}\delta}(\tilde{\tilde{x}})\Big[c_{2\delta\alpha^{k-1}, 3\delta \alpha^{k-1}}(x_k) \hat{x} + (1-c_{2\delta\alpha^{k-1}, 3\delta \alpha^{k-1}}(x_k))g_{\alpha^k \delta x_k}(|\hat{x}|)\frac{\hat{x}}{|\hat{x}|} \Big]\\
	& + (1-d_{S',\alpha^{k-1}\delta}(\tilde{\tilde{x}}))\hat{x}+ \tilde{x}.
	\end{aligned}
	$$
	Again, assuming that the number $\alpha>0$ is small enough (chosen with respect to the geometry of the simplicial complex around $S$), we have that any pair  $S' \neq S''$ (both $k-1$-subsimplices of $S$) then $b_{S,S',\delta}$ and $b_{S,S'',\delta}$ act independently.
	
	We continue similarly for all $j$-subsimplices of $S$ for $j=0,1, \dots, k-2$. Then we compose all of the maps we have constructed. This gives us a $a_{S,\delta}$ with $D_{\vec{n}}a_{S,\delta} = 0$ on $S$ for any $\vec{n} \in \spn\{v_{k+1}, \dots , v_n\}$. The composition of all $a_{S,\delta}$ for all $j$-subsimplices $j=0,1, \dots, n-1$ gives a $\Xi_{\delta}$.
	

\end{document}